\documentclass[12pt,leqno]{amsart}
\usepackage{enumerate,enumitem}

\usepackage{vmargin,amscd}

\usepackage{amsmath, url}
\usepackage{amsfonts}
\usepackage{amssymb}
\usepackage[matrix,arrow,curve]{xy}
\newcommand{\dia}[1]{\begin{array}{c}{\xymatrix@C-3pt@M+2pt@R-4pt{#1 }}\end{array}}
\usepackage{tikz}
\newcommand*{\DashedArrow}[1][]{\mathbin{\tikz [baseline=-0.25ex,-latex, dashed,#1] \draw [#1] (0pt,0.5ex) -- (1.3em,0.5ex);}}

\newtheorem{thm}{Theorem}[section]
\newtheorem{cor}[thm]{Corollary}

\newtheorem{prop}[thm]{Proposition}
\newtheorem{thmBr}[thm]{Brouwer Fixed Point Theorem}
\newtheorem{thmBIDT}[thm]{Brouwer Invariance of Domain Theorem}
\theoremstyle{definition}
\newtheorem{defn}[thm]{Definition}

\newtheorem{exe}[thm]{Example}
\newtheorem{exes}[thm]{Examples}
\newtheorem{rem}[thm]{Remark}
\newtheorem{que}[thm]{Question}
\newtheorem{reminder}[thm]{Reminder}
\theoremstyle{remark}
\newtheorem{notation}[thm]{Notation}

\newcommand{\Z}{\mathbf{Z}}
\newcommand{\N}{\mathbf{N}}
\newcommand{\R}{\mathbf{R}}
\newcommand{\C}{\mathbf{C}}
\newcommand{\Q}{\mathbf{Q}}
\newcommand{\G}{\mathbf{G}}
\newcommand{\Hom}{\operatorname{Hom}}

\title[Brouwer degree and domination]
{Brouwer degree,
\\
domination of manifolds,
\\
and groups presentable by products}

\author{Pierre de la Harpe}

\subjclass[2000]{55M25, 57N65}

% 55M25 Degree, winding number
% 57N65 Algebraic topoology of manifolds

\date{March 20, 2015, revised April 19, 2016}

\address{Pierre de la Harpe:
Section de math\'ematiques, 
Universit\'e de Gen\`eve,  
C.P.~64, 
CH--1211 Gen\`eve 4. 
e-mail: Pierre.delaHarpe@unige.ch
}

\thanks{I am grateful to Michel Boileau, Michelle Bucher-Karlsson, Jean-Claude Hausmann, 
Dieter Kotschick, Claude Weber, and Susanna Zimmermann, 
for helpful remarks concerning the subject of this exposition,
as well as to
Christoforos Neofytidis and a referee who sent me constructive expert comments 
on a first version.
The paper is an expanded version of an expository talk,
and I am grateful to Ruth Kellerhals for inviting me to give it in the Fribourg Colloquium
on February 24, 2015.}

\begin{document}

\begin{abstract}
For oriented connected closed manifolds of the same dimension,
there is a transitive relation: $M$ \emph{dominates} $N$, or $M \ge N$,
if there exists a continuous map of non-zero degree from $M$ onto $N$.
Section \ref{thecaseofmanifolds} is a reminder on the notion of degree (Brouwer, Hopf),
Section \ref{sectiondomination} shows examples of domination
and a first set of obstructions to domination due to Hopf,
and Section \ref{secsimvol} describes obstructions in terms of
Gromov's simplicial volume.
\par

In Section \ref{sectionproduct} we address the particular question of
 when a given manifold 
can (or cannot) be dominated by a product.
These considerations suggest a notion for groups (fundamental groups),
due to  D.\ Kotschick and C.\ L\"oh: 
a group is \emph{presentable by a product}
if it contains two infinite commuting subgroups
which generate a subgroup of finite index.
The last section shows a small sample of groups which are not presentable by products;
examples include appropriate Coxeter groups.
\end{abstract}

\maketitle

\section{Two definitions of the Brouwer degree}
% section1
\label{thecaseofmanifolds}

Degrees as defined below can be traced back to Brouwer \cite{Brou--11}.
Standard references for the Brouwer degree 
include \cite[\S~9]{Pont--59},  \cite{Miln--65}, \cite{Hirs--76}, and \cite[\S~5.5]{Pras--06}.

Consider a fixed dimension $n \ge 1$
and two oriented connected closed smooth manifolds $M$, $N$ of dimension $n$.
Let $f : M \longrightarrow N$ be a smooth map.
Recall that a \emph{regular value} of $f$ is a point $y \in N$
such that the differential of $f$ is invertible at each point of $f^{-1}(y)$.
It is a basic result due to A.\ Sard (1942) that the complement in $N$
of the set of regular values has Lebesgue measure zero;
in particular, the set of regular values of $f$ is dense
\cite[Theorem 3-III]{Brow--35}, and a fortiori non-empty.
For more recent proofs of the results of Brown and Sard, 
see e.g.\ \cite[Page 10]{Rham--60} or \cite[$\S$~2 \& 3]{Miln--65}.
\par

If $y \in N$ is a regular value of $f$, 
then $f$ is a local diffeomorphism around every $x \in f^{-1}(y)$;
it follows that $f^{-1}(y)$ is discrete in $M$;
since $f^{-1}(y)$ is also compact, it is therefore finite.
For $x \in f^{-1}(y)$ define $\varepsilon _x(f)$ to be $1$ if $f$ is orientation preserving at $x$
and  $-1$ if $f$ is orientation reversing at $x$.

\begin{defn}
% 1.1
\label{defdegmanprov}
The \textbf{local degree of $f$ at a regular value $y$} is the integer
\begin{equation}
\tag{$\Sigma$}
\label{eqof1.1}
\deg_{(y)} (f) \, = \, \sum_{x \in f^{-1}(y)} \varepsilon_x (f) .
\end{equation}
\end{defn}

It is a particular case of the definition that
$\deg_{(y)} (f) = 0$ if $f$ is \emph{not} surjective and $y$ not in the image of $M$.

\begin{prop}
% 1.2
\label{indephomot}   
Let $f : M \longrightarrow N$ be a smooth map
and $y$ a regular value of $f$, as above.
\begin{enumerate}[noitemsep,label=(\arabic*)]
\item\label{1DEindephomot}
The local degree $\deg_{(y)} (f)$ defined in (\ref{eqof1.1}) 
is independent on  the choice of the regular value~$y$.
\item\label{2DEindephomot}
For another smooth map $f' : M \longrightarrow N$ homotopic to $f$
and a regular value $y'$ of $f'$, 
the local degrees coincide: $\deg_{(y')} (f') = \deg_{(y)} (f)$.
\end{enumerate}
\end{prop}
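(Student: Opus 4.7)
The plan is to first establish local constancy of $y \mapsto \deg_{(y)}(f)$ on the open set $R_f \subset N$ of regular values, then deduce (1) from path-connectedness of $R_f$, and finally prove (2) by a one-dimensional cobordism argument applied to the homotopy (using (1) to reduce to a common regular value). For local constancy, fix a regular value $y$ of $f$ and write $f^{-1}(y) = \{x_1, \dots, x_k\}$. The inverse function theorem yields disjoint open neighborhoods $U_i$ of $x_i$ that $f$ sends diffeomorphically onto open neighborhoods $V_i$ of $y$, with sign $\varepsilon_{x_i}(f)$ constant on $U_i$. The set $K = f\bigl(M \setminus \bigcup U_i\bigr)$ is compact and avoids $y$, so $V := \bigl(\bigcap V_i\bigr) \setminus K$ is an open neighborhood of $y$ in which every regular value $y'$ satisfies $f^{-1}(y') \subset \bigcup U_i$ with matching signs, giving $\deg_{(y')}(f) = \deg_{(y)}(f)$.

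For (1), by Sard's theorem $N \setminus R_f$ has Lebesgue measure zero, and since $M$ is compact the set of critical values is closed, so $R_f$ is open and dense. Any two regular values $y_0, y_1$ can be joined by a smooth path in $N$, which by a standard perturbation argument may be assumed to stay in $R_f$; combined with local constancy this yields $\deg_{(y_0)}(f) = \deg_{(y_1)}(f)$.

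For (2), apply Sard's theorem to $F \colon M \times [0,1] \to N$, to $f$, and to $f'$ to find a common regular value $z$. Then $F^{-1}(z)$ is a compact one-dimensional submanifold with boundary of $M \times [0,1]$, whose boundary is $\bigl(f^{-1}(z) \times \{0\}\bigr) \sqcup \bigl(f'^{-1}(z) \times \{1\}\bigr)$. Each component of $F^{-1}(z)$ is either a circle or a closed arc; circles are boundary-free, and for each arc the two endpoint signs, computed with a consistent orientation convention on $F^{-1}(z)$, combine to give $\deg_{(z)}(f) = \deg_{(z)}(f')$. Combining with (1) then gives $\deg_{(y)}(f) = \deg_{(y')}(f')$.

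The main obstacle is the orientation bookkeeping in the cobordism step: one must fix an orientation convention for $F^{-1}(z)$ (induced by the orientations of $M \times [0,1]$ and $N$) and check carefully that the two endpoint signs of each arc contribute oppositely to the difference $\deg_{(z)}(f') - \deg_{(z)}(f)$, so that this difference vanishes. The local constancy and path-connectedness ingredients used in (1) are comparatively routine.
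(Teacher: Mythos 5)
Your proof of (2) is essentially the standard cobordism argument and is fine in outline (modulo one point noted below), but your proof of (1) has a genuine gap. It rests on the claim that any two regular values can be joined by a path which, after perturbation, stays inside the set $R_f$ of regular values. That claim is false in general: $R_f$ is open and dense (the critical values form a compact set of measure zero), but it need not be connected, and no perturbation can help when the critical value set separates $N$. Concretely, take $f : S^2 \longrightarrow S^2$ given in latitude--longitude coordinates by $(\theta,\phi) \longmapsto (g(\theta),\phi)$, where $g$ is nondecreasing, surjective, and constant equal to $0$ on a band $\vert \theta \vert \le \varepsilon$: the critical value set contains the whole equator, which disconnects the regular values into a northern and a southern region, and every path between them must cross critical values. ``Measure zero and closed'' is not the kind of smallness a perturbation or transversality argument can avoid (a hypersurface already has measure zero), so local constancy plus path-connectedness of $R_f$ cannot establish (1).

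The standard repair (Milnor's) reverses your logical order: first prove the homotopy statement at a \emph{common} regular value by the cobordism argument you sketch for (2) --- noting that the given homotopy is a priori only continuous and must first be replaced by a smooth homotopy with the same endpoints --- and then deduce (1) from the homogeneity (isotopy) lemma: for regular values $y_0, y_1$ of $f$ there is a diffeomorphism $h$ of $N$ isotopic to the identity with $h(y_0)=y_1$; then $y_1$ is a regular value of $h \circ f$ with $\deg_{(y_1)}(h \circ f) = \deg_{(y_0)}(f)$ because $h$ preserves orientation, while $h \circ f$ homotopic to $f$ gives $\deg_{(y_1)}(h \circ f) = \deg_{(y_1)}(f)$. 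Note also that the paper takes a different route altogether, which sidesteps these issues: it builds collapsing maps of $M$ and $N$ onto an $n$-sphere adapted to the discs around $y$ and its preimages, and reads off the identity $f_n([M]) = \deg_{(y)}(f)\,[N]$ in top homology, from which both (1) and (2) are immediate since the left-hand side does not involve $y$ and homotopic maps induce the same map in homology.
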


\begin{reminder}[fundamental class]
% 1.3
\label{remfc}
For a connected closed manifold $M$ of dimension $n$, 
the homology group $H_n(M; \Z)$ is 
either isomorphic to $\Z$, in which case $M$ is \textbf{orientable}, 
or is $\{0\}$.
When $M$ is orientable, an \textbf{orientation} of $M$
is a choice of generator 
$$
[M] \, \in \, H_n(M; \Z) \simeq \Z ,
$$
called the \textbf{fundamental class} of $M$.
See for example \cite[Chapter 3]{Hatc--02};
more on the fundamental class in \ref{dualFCZ} below.
\end{reminder}

\begin{notation}[induced maps]
% 1.4
\label{notationhomothomol}
Let $X, Y$ be topological spaces, 
$f : X \longrightarrow Y$ a continuous map,
$j$ a non-negative integer, and $A$ an abelian group.
We denote by 
$$
f_j \, : \,  H_j(X; A) \longrightarrow H_j(Y; A)
$$ 
the map induced by $f$ in homology with coefficients in $A$,
and by 
$$
f^j \, : \,  H^j(Y; A) \longrightarrow H^j(X; A)
$$
the map induced by $f$ in cohomology.
\par

If $X, Y$ are connected,
and good enough to have a standard theory of fundamental groups and coverings
(in particular if $X$ and $Y$ are connected manifolds),
and if base points have been chosen,
we denote by 
$$
f_\pi \, : \, \pi_1(X) \longrightarrow \pi_1(Y)
$$
the homomorphism induced on fundamental groups.
\end{notation}

\begin{proof}[Proof of Proposition \ref{indephomot}]
Let $D_y$ be a small embedded open disc (or simplex) in $N$ centred at $y$.
If $D_y$ is small enough, there exists for all $x \in f^{-1}(y)$
an open disc $D_x$ such that the restriction of $f$ to $D_x$
is a diffeomorphism, say $g_x$, from $D_x$ onto $D_y$, and the $D_x$ 's are disjoint.
Observe that $g_x$ is orientation preserving [respectively orientation reversing]
when $\varepsilon_x(f) = 1$ [resp.\ when $\varepsilon_x(f) = -1$]. 
\par

Let $S$ denote an oriented sphere of dimension $n$, with a base point $p$.
Let $\pi_N : N \longrightarrow S$ be an orientation-preserving map obtained by 
mapping $D_y$ diffeomorphically onto the complement of $p$ in $S$,
and collapsing the complement of $D_y$ in $N$ to $p$.
Let $\pi_M : M \longrightarrow S$ be the map 
which coincides with $\pi_N \circ g_x$ on $D_x$, for each $x \in f^{-1}(y)$,
and collapses the complement of $\bigsqcup_{x \in f^{-1}(y)} D_x$ in $M$ to $p$.
The diagram
\begin{equation*}
\dia{
M
\ar@{->>}[d]^{\pi_M}
\ar@{->}[r]^{f}
& N 
\ar@{->>}[d]^{\pi_N}
\\
% row 2
S
\ar@{->}[r]^{\operatorname{id}}
& S
}
\end{equation*}
commutes (where $\operatorname{id}$ denotes the identity).
\par

The corresponding maps on the top-dimensional homology groups
provide the commutative diagram
\begin{equation*}
\dia{
\Z [M] = H_n(M; \Z) \hskip.2cm
\ar@{->}[d]^{(\pi_M)_n}
\ar@{->}[r]^{f_n}
\hskip.2cm & \hskip.2cm H_n(N; \Z) = \Z [N] 
\ar@{->>}[d]^{(\pi_N)_n}
\\
% row 2
\Z [S] = H_n(S; \Z) \hskip.2cm
\ar@{->}[r]^{\operatorname{id}}
\hskip.2cm & \hskip.2cm H_n(S; \Z) = \Z [S]
}
.
\end{equation*}
We have $(\pi_N)_n([N]) = [S]$
and $(\pi_M)_n ([M]) =  \sum_{x \in f^{-1}(y)} \varepsilon_x (f) [S] = \deg_{(y)} (f)  [S]$.
Hence
\begin{equation*}
f_n([M]) \, = \, \deg_{(y)} (f) [N] .
\end{equation*}
Since the left-hand side is independent of $y$, Claim (1) holds.
Claim (2) follows because $f'_n = f_n$.
\end{proof}

As a consequence, we can now define the degree of $f$,
denoted by $\deg (f)$, 
to be the number that appears in (\ref{eqof1.1}),
for any regular value $y$ of $f$. 
But, as a consequence of Proposition \ref{indephomot},
more is true, and we can define 
the degree of any \emph{continuous} map between manifolds.

\begin{defn}
% 1.5
\label{defdegman}
Let $M$ and $N$ be two oriented connected closed topological manifolds 
of the same dimension, say $n \ge 1$,
and $f : M \longrightarrow N$ a continuous map.
The \textbf{degree} of $f$ is the integer $\deg (f) \in \Z$ such that
\begin{equation}
\tag{defdeg}
\label{eqof1.5}
f_n([M]) \, = \, \deg (f) [N] \, \in \, H_n(N; \Z) .
\end{equation}
This definition is due to Hopf:
see Formula (3) in \cite{Hopf--30}. 
\end{defn}

In Definition \ref{defdegman}, assume moreover that $M$ and $N$ are smooth.
Claim: $f$ can be approximated by smooth maps.
For a precise definition of the appropriate topology
on the space $\mathcal C (M,N)$ of continuous map from $M$ to $N$
and for a proof of the claim, see \cite{Hirs--76},
Chapter 2,  in particular Theorem 2.6.
Alternatively, see \cite{Pras--06}, Chapter 5, Section~4.4.

Since two smooth approximations $g_0, g_1$ of $f$ 
are homotopic (being close to each other),
their local degrees are equal, by  Proposition \ref{indephomot}.
Hence the local degrees of Definition \ref{defdegmanprov} coincide with
the homologically defined degree of Definition \ref{defdegman}.
\par
As a digression, we note that this strategy applies to spaces of maps from $M$ to $N$
larger than the space of continuous maps; see \cite{BrNi--95}.
\par
Brouwer's definition keeps its merits for many purposes, 
in particular to suggest definitions in a setting 
adapted to functional analysis and differential equations,
as in a very influential 1934 paper by Leray and Schauder \cite{LeSc--34}.

\vskip.2cm

Note also that the connectedness of $M$ in Definition \ref{defdegman}
is not an issue: in case $M$ has several connected components
$M_i$, the degree of $f : M \longrightarrow N$ is the sum of the
degrees of the restrictions $f \vert_{M_i}$.

\vskip.2cm

The two claims of the following proposition are straightforward consequences
of Definition \ref{defdegman}.

\begin{prop}
% 1.6
\label{degcomposition}
Let $M, N, P$ be oriented connected closed manifolds of the same dimension $n$.

\vskip.2cm

(1)
Let $f : M \longrightarrow N$ and $g : N \longrightarrow P$ be continuous maps.
Then
$
\deg (g \circ f) =
\deg(f)  \deg (g) .
$

\vskip.2cm

(2)
Suppose that $M = M_1 \times M_2$ and $N = N_1 \times N_2$ are product manifolds,
where $M_1, M_2, N_1, N_2$ are oriented connected closed manifolds
with $\dim (M_1) = \dim (M_2)$ and $\dim (N_1) = \dim (N_2)$,
and that $f : M \longrightarrow N$ is a product of two continuous maps $f_j : M_j \longrightarrow N_j$ ($j=1,2$).
Then $\deg (f) = \deg (f_1) \deg (f_2)$.
\end{prop}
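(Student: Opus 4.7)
\medskip

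My plan is to treat both assertions as direct applications of Definition \ref{defdegman} combined with standard functorial properties of singular homology; no return to the local (Brouwer) definition should be needed.

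For (1), the plan is to use functoriality: $(g \circ f)_n = g_n \circ f_n \colon H_n(M;\Z) \to H_n(P;\Z)$. Applying both sides to the fundamental class $[M]$, the definition of $\deg(f)$ gives $f_n([M]) = \deg(f)[N]$, and then $\Z$-linearity of $g_n$ together with the definition of $\deg(g)$ yields
\begin{equation*}
(g \circ f)_n([M]) \, = \, g_n\bigl(\deg(f)[N]\bigr) \, = \, \deg(f)\, g_n([N]) \, = \, \deg(f)\deg(g)\, [P].
\end{equation*}
Comparing with $(g \circ f)_n([M]) = \deg(g \circ f)[P]$ and using that $[P]$ generates $H_n(P;\Z) \simeq \Z$, we conclude $\deg(g \circ f) = \deg(f)\deg(g)$. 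This step is wholly mechanical.

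For (2), the key input is the behavior of the fundamental class under products. The plan is to invoke the homological cross product $\times \colon H_p(X;\Z) \otimes H_q(Y;\Z) \to H_{p+q}(X \times Y;\Z)$, together with the fact (a consequence of the K\"unneth formula, or equivalently of the product orientation convention) that for oriented connected closed manifolds
\begin{equation*}
[M_1 \times M_2] \, = \, [M_1] \times [M_2] \quad\text{and}\quad [N_1 \times N_2] \, = \, [N_1] \times [N_2] .
\end{equation*}
Then I would use the naturality of the cross product under induced maps,
\begin{equation*}
(f_1 \times f_2)_{*}([M_1] \times [M_2]) \, = \, (f_1)_{*}[M_1] \times (f_2)_{*}[M_2] ,
\end{equation*}
and substitute $(f_j)_{*}[M_j] = \deg(f_j)[N_j]$ to obtain $f_{*}[M] = \deg(f_1)\deg(f_2)\, [N]$, whence $\deg(f) = \deg(f_1)\deg(f_2)$.

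The only point that is not purely formal is the identification of $[M_1 \times M_2]$ with the cross product $[M_1] \times [M_2]$, which is where a convention (the \emph{product orientation}) and an appeal to the K\"unneth theorem (or to the construction of the fundamental class via a triangulation compatible with a product CW structure) enters. I would simply cite this, for example from \cite[Chapter 3]{Hatc--02}, rather than reprove it, since the author has also referred the reader there for the notion of fundamental class.
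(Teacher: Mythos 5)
Your argument is correct and is exactly the route the paper intends: the paper states this proposition without proof, as ``straightforward consequences of Definition \ref{defdegman}'', and your functoriality computation for (1) and your cross-product/K\"unneth argument for (2) (using $[M_1 \times M_2] = [M_1] \times [M_2]$ for the product orientation and naturality of $\times$) supply precisely those consequences. Nothing further is needed.
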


\begin{rem}
% 1.7
\label{3DEindephomot}
Besides \ref{1DEindephomot} and \ref{2DEindephomot}
in Proposition \ref{indephomot},
the following important step 
in the standard proof of the proposition
has an independent interest (see \cite[\S~5, Lemma 1]{Miln--65}):
\begin{enumerate}[noitemsep]
\item[]
\emph{suppose that $M$ is the boundary of 
an oriented connected smooth manifold with boundary, say $X$, of dimension $n+1$, 
and that $f$ extends to a smooth map $X \longrightarrow N$;
then $\deg (f) = 0$.}
\end{enumerate}
Indeed, $f$ extends if and only if $\deg (f) = 0$
\cite[Chap.\ 5, Th.\ 1.8]{Hirs--76}.
\end{rem}

\begin{rem}[history, homology and groups]
% 1.8
The formulation above is completely standard nowadays,
and rather far from the original formulation of Brouwer,
for several reasons. 
\par

One is that Brouwer used ``piecewise linear'' approximations (rather than smooth approximations)
of continuous mappings.
\par
Another one is that the notion of group was not used in homology theory
before the mid 20's, when homology invariants were promoted 
from numbers (Riemann, Betti, Poincar\'e) to groups,
under the strong influence of Emmy Noether 
and a few others (Hopf, Mayer, Vietoris).
E.\ Noether gave lectures on the subject in G\"ottingen, 
but apparently did not publish anything but
a 14-lines report\footnote{Noether's report
is about the structure of finitely generated abelian groups.
In its 14 lines, topology is mentioned in the last phrase only:
``Der Gruppensatz erweist sich so als der einfachere Satz;
in den Anwendungen des Gruppensatzes 
-- z.B.\ Bettische und Torsionszahlen in der Topolgie --
ist somit ein Zur\"uckgehen auf die Elementarteilertheorie nicht erforderlich.''
}
dated 27.\ Januar 1925 \cite{Noet--25}; see \cite{Hirz--96}.
\par

Heinz Hopf had received his doctorate in 1925 in Berlin, from Erhard Schmidt.
He went then to G\"ottingen as a young ``postdoc'',
and met E.\ Noether and P.\ Alexandroff.
The book ``Topologie'' by Alexandroff and Hopf
was published in 1935 \cite{AlHo--35}, with the dedication
\begin{center}
L.E.J.\ Brouwer gewidmet.
\end{center}
\par
Topology was not quite a respectable subject in these times;
note for example that it plays a rather minor role in Hilbert's problems.
It seems that Hilbert was not comfortable
with Poincar\'e's highly intuitive approach to topology, 
involving ``mistakes'', corrections, and imprecise statements.
From the middle 20's onwards, topology became somehow respectable \cite{Eckm--06};
this owes a lot to the work of Hopf, and a few others including Alexander and de Rham.
\par

Thom gave his view on the Brouwer degree in modern differential topology \cite{Thom--71}.
\end{rem} 

\begin{rem}[dual fundamental classes]
% 1.9
\label{dualFCZ}
Let $M, N$ are orientable connected closed manifolds
of the same dimension, $n \ge 1$, 
and $f : M \longrightarrow N$ a continuous map.

\vskip.2cm

It follows from the Universal Coefficient Theorem for cohomology
that we have an isomorphism $H^n(M; \Z) \simeq \Hom (H_n(M; \Z), \Z)$,
and therefore that $\Hom (H_n(M; \Z), \Z)$ is isomorphic to $\Z$.
The \textbf{dual fundamental class} is the element $[M]^* \in H^n(M; \Z)$
which maps $[M] \in H_n(M; \Z)$ to $1 \in \Z$.
Similarly for $[N]^* \in H^n(M; \Z)$.
\par

The induced map on $H^n(\cdot; \Z)$ is the transposed
$$
f^n \, : \, 
\operatorname{Hom}(H_n(N; \Z), \Z) \, \longrightarrow \, \operatorname{Hom}(H_n(M; \Z), \Z),
\hskip.2cm 
c \, \longmapsto \, c \circ f_n
$$
of the map $f_n : H_n(M; \Z) \longrightarrow H_n(N; \Z)$ in homology. 
Hence we have a dual formulation
\begin{equation}
\tag{defdegco}
\label{eqof1.9}
(f)^n([N]^*) \, = \, \deg (f) [M]^* \, \in \, H^n(M; \Z) 
\end{equation}
of the definition via (\ref{eqof1.5}) in \ref{defdegman}.

\vskip.2cm

A particular case of the Universal Coefficient Theorem for homology establishes that
the natural map
$H_n(M; \Z) \otimes \Q \longrightarrow H_n(M; \Q)$  
is an isomorphism.
The image of $[M] \otimes 1$ is not zero,
and is consequently a basis of the one-dimensional $\Q$-vector space $H_n(M; \Q)$;
it is denoted by $[M]$ again.
Similarly for $[N] \in H_n(N; \Q)$.
\par

For a map $g$ of  $M$ to itself, the degree appears
as the multiplication factor of the induced endomorphism of $H_n(M; \Q)$, 
without reference to $H_n(M; \Z)$;
but this conceals that $\deg (g)$ is always an \emph{integer}.
For $f : M \longrightarrow N$,
the degree of $f$ is again given by
$f_n([M]) = \deg (f) [N] \in H_n(N; \Q)$;
but there is in this case a reference to $H_n(\cdot; \Z)$, 
because $H_n(M; \Q)$ and $H_n(N; \Q)$
\emph{have to} be viewed together with distinguished elements,
which are images of  fundamental classes defined in integral homology.
\end{rem}

\begin{exes}
% 1.10
\label{exdeg}
Let $M,N$ be as in Definition \ref{defdegman}.

\vskip.2cm

\label{1DEexdeg}
(1)
For every $d \in \Z$, there  exists a continuous map $M \longrightarrow S^n$
of degree $d$.  
\par

This holds if $M$ is the circle $S^1 = \{ z \in \C \mid \vert z \vert = 1 \}$,
because the map $z \longmapsto z^d$ of the circle into itself is of degree $d$.
This holds if $M$ is a sphere of larger dimension $n$, by induction:
indeed, if $\varphi$ is a continuous map of a $n$-sphere into itself,
its suspension is a continuous map 
of a $(n+1)$-sphere into itself of the same degree as $\varphi$.
Consider now the general case.
There exists a map from $M$ to $S^n$ of degree $1$
(see the proof of Proposition \ref{indephomot});
the composition with a map from $S^n$ to itself of degree $d$
is a map from $M$ to $S^n$ of degree $d$.  
\par
It is a famous theorem of Hopf \cite{Hopf--27} that:
\begin{center}
\emph{two continuous maps from $M$ to the $n$-sphere are homotopic
\\
if and only if their degrees coincide.}
\end{center}
Indeed, Whitney proved later (1937) the following generalization:
if $X$ is any connected polyhedron of dimension at most $n$, 
homotopy classes of continuous maps $f : X \longrightarrow S^n$
are in bijection with $H^n(X; \Z)$, the homotopy class of $f$
corresponding to the image $f^n([S^n]^*) \in H^n(X; \Z)$,
where $[S^n]^*$ is the dual fundamental class of the $n$-sphere.

\vskip.2cm

(2)
\label{2DEexdeg}
If there exists a homeomorphism $f$ from $M$ onto $N$,
then $\deg(f) = 1$ if $f$ is orientation-preserving,
and $\deg(f) = -1$ if $f$ is orientation-reversing.
The conclusion carries over to the case of a homotopy equivalence
$f : M \longrightarrow N$.
\par

There are known examples of pairs $(M,N)$ of non-homeomorphic manifolds
for which there exists a homotopy equivalence from $M$ to $N$.
In dimension $3$, there is a standard example,
that of the pair of lens spaces $L(7,1)$, $L(7,2)$.

\vskip.2cm

\footnotesize
\emph{Reminder on lens spaces.}
For $p \ge 2$ and $k \in \{1, \hdots, p\}$ coprime with $p$,
let $L(p, k)$ denote
the quotient of the $3$-sphere $\{ (z_1, z_2) \in \C^2 \mid \vert z_1 \vert^2 + \vert z_2 \vert^2 = 1\}$
by an action of the cyclic group $\Z / p\Z$ for which a generator acts by
$(z_1, z_2) \longmapsto (\exp (2 \pi i / p) z_1,  \exp(2 \pi i k/p) z_2)$.
We have $\pi_1(L_p(p,k)) \simeq \Z / p\Z$.
\par
In the mid 30's,
Reidemeister has found a necessary condition for $L(p,k)$ and $L(p,\ell)$ to be PL-homeomorphic;
in particular $L(7,1)$ and $L(7,2)$ are NOT homeomorphic.
Rueff, and slightly later Franz,
have studied conditions for $L(p,k)$ and $L(p,\ell)$ to have the same homotopy type;
in particular $L(7,1)$ and $L(7,2)$ DO have the same homotopy type.
More on lens spaces in \cite{Rham--67, Miln--66, Volk--13}.
\par
The following result uses a lot, including Perelman's work:
Let $M$, $N$ be two irreducible connected closed $3$-manifold 
which have the same homotopy type.
At least one of the following is true:
(i) $M$ and $N$ are homeomorphic,
(ii) $M$ and $N$ are lens spaces.
[From a conversation with Claude Weber.]
\normalsize

\vskip.2cm

\label{3DEexdeg}
(3)
If there exists a covering $f : M \longrightarrow N$ with $k$ leaves,
then $\deg (f) = k$ or $\deg (f) = -k$.

This carries over to branched coverings,
i.e.\ to maps $f : M \longrightarrow N$ for which there is a subset $B \subset M$
``small enough'' (e.g. of codimension at least two)
such that the restriction of $f$ to $M \smallsetminus B$ is a covering
over $N \smallsetminus f(B)$.

\vskip.2cm

\label{4DEexdeg}
(4)
Suppose that $M$ is a connected sum $M = N \sharp P$ 
of two oriented connected closed manifolds $N, P$ of the same dimension.
The manifold obtained from $M$ by collapsing $P$ to a point
can be identified with $N$, so that there is a map $f : M \longrightarrow N$
well defined up to homotopy.
Then $\deg (f) = 1$.
\par
In particular, since $M$ and $M \sharp S^n$ are homeomorphic,
we obtain again a map $M \longrightarrow S^n$ of degree $1$, as in (1).

\vskip.2cm

\label{5DEexdeg}
(5)
Suppose that $\Sigma_g$ and $\Sigma_h$ are 
oriented connected closed surfaces, of genus $g$ and $h$ respectively.
The following properties are equivalent: 
\begin{enumerate}[noitemsep]
\item[$\cdot$]
$g \ge h$, 
\item[$\cdot$]
there exists a continuous map $f$ from $\Sigma_g$ to $\Sigma_h$ of non-zero degree,
\item[$\cdot$]
there exists a continuous map $f$ from $\Sigma_g$ to $\Sigma_h$ of degree $1$.
\end{enumerate}

\vskip.2cm

The equalities in (2), (3) and (4) follow from the definitions,
and the equivalences of (5) are left as an exercise for the reader.
[Hint: to exclude the existence of maps with non-zero degrees,
use Proposition \ref{propdegmanif}\ref{2DEpropdegmanif} below;
see Example \ref{exapresUmkehr}(3).]

\vskip.2cm

\label{6DEexdeg}
(6)
For every continuous map $f$ of the complex projective space $P^n_\C$ to itself,
the degree is an integer of the form $d^n$, for some $d \in \Z$.
\par
Indeed, on the one hand, 
if $h$ denotes the standard generator of $H^2(P^n_\C; \Z) \simeq \Z$, 
so that $h^n$ is the dual fundamental class $[P^n_\C]^* \in H^{2n}(P^n_\C; \Z)$,
and if $d$ is the integer such that $f^2(h) = dh$,
then $f^{2n}([P^n_\C]^*) = \left( f^2(h) \right)^n = (dh)^n = d^n  [P^n_\C]^*$.
\par
On the other hand, the map given in homogeneous coordinates by
$$
P^n_\C \longrightarrow P^n_\C , \hskip.2cm
(z_0:z_1: \cdots :z_n) \longmapsto (z_0^d : z_1^d : \cdots : z_n^d) 
$$
has degree $d^n$ for every $d \in \N$.
\end{exes}

There are many important statements 
with simple proofs involving the notion of Brouwer degree.
Here is a standard sample.

\begin{thmBr}
% 1.11
\label{thmBrouwer}
Every continuous mapping $f$ from the $n$-disc $D^n$ into itself has a fixed point.
\end{thmBr}

\begin{proof}
We identify $D^n$ to the upper hemisphere, defined by $x_n \ge 0$,  of
$$
S^n \, = \,  \Big\{ (x_0, \hdots, x_n) \in \R^n
\hskip.2cm \Big\vert \hskip.2cm  
\sum_{i=0}^n x_i^2 = 1 \Big\} .
$$
Define a transformation $g$ of $S^n$ by 
$$
g(x) \, = \, \left\{
\aligned 
f(x) \hskip3.3cm &\text{if} \hskip.2cm x \in D^n \subset S^n
\\
f(x_0, \hdots, x_{n-1}, -x_n) \hskip.5cm &\text{if} \hskip.2cm
x = (x_0, \hdots, x_n) \notin D^n, \hskip.2cm x \in S^n .
\endaligned
\right.
$$
Then $\deg (g) = 0$, because $g$ is not surjective. 

Every fixed-point free transformation $h$ of $S^n$ 
has degree $(-1)^{n+1}$,
because $h$ is then homotopic to the antipodal map $a : x \longmapsto -x$,
of degree $(-1)^{n+1}$; indeed,
$H(t,x) = \frac{ th(x) - (1-t)x }{ \Vert th(x) - (1-t)x \Vert }$
defines a homotopy from $H(1, \cdot) = h$ to $H(0, \cdot) = a$,
so that $\deg (h) = \deg (a) = (-1)^{n+1} \ne 0$.

Hence $g$ has a fixed point, and every fixed point of $g$ is also a fixed point of $f$.
\end{proof}

\begin{thmBIDT}
% 1.12
\label{thmInvarianceDomain}
Let $U$ be an open subset of $\R^n$ and $\varphi : U \longrightarrow \R^n$
a continuous injective map; then $f(U)$ is open.
\end{thmBIDT}

For a proof using the notion of Brouwer degree, see e.g.\ 
\cite[$\S$~6.2]{Tao--14}.

\begin{rem}[other manifolds]
% 1.13
In Definition \ref{defdegman}, the requirements about $M, N$ being closed can be relaxed.

\vskip.2cm

(1)
Consider manifolds with boundaries.
Let $M$ be a connected compact $n$-manifold, possibly with non-empty boundary $\partial M$.
What has been recalled in \ref{remfc} extends as follows:
the homology group $H_n(M, \partial M; \Z)$ is
either isomorphic to $\Z$, in which case $M$ is \textbf{orientable}, 
or is $\{0\}$.
When $M$ is orientable, an \textbf{orientation} of $M$
is a choice of generator $[M, \partial M]$ in $H_n(M, \partial M; \Z) \simeq \Z$,
and $[M, \partial M]$ is then called the \textbf{fundamental class} of $M$.
\par

Let $M,N$ be two such oriented manifolds.
The degree of a continuous map $f$ from $M$ to $N$ such that $f(\partial M) \subset \partial N$
is the integer $d$ such that
\begin{equation}
\tag{$\sharp$}
\label{eqof1.15}
f_n([M, \partial M]) \, = \, d [N, \partial N] ,
\end{equation}
where $f_n :  H_n(M, \partial M; \Z) \longrightarrow H_n(N, \partial N; \Z)$
is induced by $f$.
\par

Supposer moreover that $M$ and $N$ are smooth.
A map $f : (M, \partial M) \longrightarrow (N, \partial N)$ as above 
is homotopic to a smooth map, say $g$.
Such a smooth map has regular values in $N \smallsetminus \partial N$,
and the degree $\deg(f) = \deg(g)$ can be computed as in Equation (\ref{eqof1.1})
of Definition \ref{defdegmanprov}.

\vskip.2cm

(2)
Let now $M, N$ be two orientable connected $n$-manifolds without boundary
(note that $M, N$ need not be compact).
If $M, N$ are smooth,
Definition  \ref{defdegmanprov}  and Proposition \ref{indephomot}
carry over to \emph{proper} smooth maps from $M$ to $N$.
\par

If $M$ is not compact, recall that $H_n(M; \Z) = \{0\}$.
But Definition \ref{defdegman} can be adapted
by using an appropriate notion of homology
for which an orientable manifold need not be compact to have a fundamental class,
for example by using the locally finite homology exposed in \cite[Part III]{Geog--08}.
\end{rem}

\section{Domination}
% section2
\label{sectiondomination}

Let $M, N$ be oriented connected closed manifolds of dimension $n$.

\begin{defn}
% 2.1
\label{defdomin}
The manifold $M$ \textbf{dominates} $N$ 
if there exists a continuous map of non-zero degree from $M$ to $N$.
More precisely, for $d \ge 1$, the manifold $M$ \mbox{$d$-dominates} $N$
if there exists a continous map of degree $\pm d$ from $M$ to $N$.
\end{defn}

\footnotesize
\noindent \emph{Watch out!}
Let $X,Y$ be topological spaces.
Then $X$ \emph{dominates} $Y$ in the sense of J.H.C.\ Whitehead
if there exist continuous maps $i : Y \longrightarrow X$ and $r : X \longrightarrow Y$
such that  $r \circ i$ is homotopic to $\operatorname{id}_X$.
If $X$ dominates $Y$ in this sense, 
then $r_j : H_j(X; \Z) \longrightarrow H_j(Y; \Z)$ is surjective
for all $j \ge 0$.
See \cite{Whad--48} and \cite[Appendix]{Hatc--02}.
\par
Below, domination will always refer to Definition \ref{defdomin},  and not to Whitehead's notion.
\normalsize

\vskip.2cm

The following structuring question has been asked by Gromov in a talk from 1978
(according to \cite{CaTo--89}).
It is of course present, implicitly but strongly, in \cite{Hopf--30},
and also later in \cite{MiTh--77} and \cite{Grom--82}.

\begin{que}
% 2.2
\label{questionGrom}
Let $M,N$ be as above. 
Can one decide whether or not $M$ dominates $N$ ?
\end{que}

\begin{exes}
% 2.3
\label{exdomin}
Each of (1) to (5) below is related to the same number in Example \ref{exdeg} above.

\vskip.2cm
 
(1) 
Every $n$-manifold $M$ as above dominates the $n$-sphere.
Compare with the proof of Proposition \ref{indephomot},
and with (4) in Example \ref{exdeg}.

\vskip.2cm

(2) 
Two manifolds of the same homotopy type dominate each other.

\vskip.2cm

(3) 
A covering or a branched covering with a finite number of sheets is a domination.
\par

For a specific example, consider a non-trivial finite subgroup $\Gamma$
of the $3$-sphere $S^3$,
identified to the special unitary group $\operatorname{SU}(2)$:
set $N = S^3/\Gamma$ and $d = \vert \Gamma \vert$.
There exists a covering $S^3 \longrightarrow N$ of degree $d$
(or $-d$ for ``the other'' orientation of $N$)
and a smooth map $N \longrightarrow S^3$ of degree $1$ (as in (1) above).
Hence each of $N, S^3$ dominates the other. 

\vskip.2cm

(4) 
A connected sum of two oriented manifolds dominates each of them.

\vskip.2cm

(5)  
In particular,
an oriented connected closed surface dominates every such surface of lower genus.

\vskip.2cm

(6)
For every oriented connected closed $3$-manifold $N$, 
there exists a \emph{hyperbolic} $3$-manifold $M$ which dominates $N$.
This is a consequence of results of Sakuma and Brooks \cite{Broo--85},
revisited in \cite{Mont--87}.
\par
Theorem \ref{thmGaifullin} for $n=3$ gives a stronger result.

\vskip.2cm

(7)
For $r \ge 1$, let $P^r_\C$ denote the complex projective space
of complex dimension $r$.
Consider two integers $p,q \ge 1$ and their sum $n = p+q$.
The product $P^p_\C \times P^q_\C$ dominates $P^n_\C$. 
\par
Here is one way to construct a holomorphic map
$P^p_\C \times P^q_\C \longrightarrow P^n_\C$ of Brouwer degree $\binom{n}{p}$.
The starting ingredient is the Segre embedding 
$$
\sigma_{[(p+1)(q+1)-1]} \, : \, \left\{
\aligned
 P^p_\C \times P^q_\C \hskip2cm   \, &\longrightarrow \,  \hskip.3cm P^{(p+1)(q+1)-1}_\C
 \\
\big( (u_0 : u_1 : \cdots : u_p) , (v_0 : v_1 : \cdots : v_q ) \big) \, &\longrightarrow \, 
( \cdots : u_iv_j : \cdots )
\endaligned \right.
$$
(the notation $( \cdots : \cdots : \cdots )$ indicates homogeneous coordinates).

For $k$ with $(p+1)(q+1)-1 > k \ge n$, define inductively
a mapping $\sigma_{[k]}$ from the image of $\sigma_{[k+1]}$ to $P^k_\C$ as follows:
choose a point $a \in P^{k+1}_\C$ not in the image of $\sigma_{[k+1]}$,
and define $\sigma_{[k]}(x)$ as the intersection with $P^k_\C$ 
(identified to a hyper surface of $P^{k+1}_\C$)
of the line containing $a$ and $x$, for all $x$ in the image of $\sigma_{[k+1]}$.
It can be checked that the composition 
$$
\sigma \,  := \,  \sigma_{[n]} \circ \sigma_{[n+1]} \circ \cdots \circ 
\sigma_{[(p+1)(q+1)-2]} \circ \sigma_{[(p+1)(q+1)-1]}
\, : \, 
P^p_\C \times P^q_\C \longrightarrow P^n_\C
$$
is a map of degree $\binom{n}{p}$.

\vskip.2cm

Let us consider the special case
$P^1_\C \times P^2_\C \longrightarrow P^3_\C$.
There are rational maps
$$
\aligned
\sigma_{[5]} \, : \, 
& P^1_\C \times P^2_\C \, \longrightarrow \, P^5_\C , \hskip.5cm
\big( (u:v) , (x:y:z) \big)
\hskip.1cm  \longmapsto \, (ux:uy:uz:vx:vy:vz) ,
\\
\rho^5_4 \, : \, 
& P^5_\C \, \DashedArrow \,  P^4_\C , \hskip.5cm 
(x_0 : x_1 : x_2 : x_3 : x_4 : x_5) 
\hskip.1cm  \vert  - \to \, 
(x_1 : x_2 : x_3 : x_4 : x_5-x_0) ,
\\
\rho^4_3 \, : \, 
& P^4_\C \, \DashedArrow \,  P^3_\C , \hskip.5cm 
(y_0 : y_1 : y_2 : y_3 : y_4) 
\hskip.1cm  \vert  - \to \, 
(y_1 : y_2-y_o : y_3 : y_4) .
\endaligned
$$
Observe that 
\begin{enumerate}[noitemsep]
\item[$\cdot$]
$\rho^5_4$ is not defined in $(1:0:0:0:0:1)$, 
which is a point outside $\operatorname{Image}(\sigma_{[5]})$, 
\item[$\cdot$]
$\rho^5_4 \circ \sigma_{[5]} :  \big( (u:v) , (x:y:z) \big) \longmapsto (uy : uz : vx : vy : vz-ux )$
is defined on the whole of $P^1_\C \times P^2_\C$, 
\item[$\cdot$]
$\rho^4_3$ is not defined in $(1:0:1:0:0)$,
which is a point outside $\operatorname{Image}(\rho^5_4 \circ \sigma_{[5]})$, 
\end{enumerate}
so that
$$
\sigma := 
\rho^4_3 \circ \rho^5_4 \circ \sigma_{[5]} \, : \, 
\left\{ \aligned
P^1_\C \times P^2_\C \hskip1cm &\longrightarrow \hskip2cm P^3_\C
\\
\big( (u:v) , (x:y:z) \big) \, &\longmapsto \, (uz : vx-uy : vy: vz-ux)
\endaligned \right.
$$
is indeed defined everywhere.
It can be checked that $\sigma$ is of degree $3$, using
Definition (\ref{eqof1.1}) of \ref{defdegmanprov}
and the regular value $(1 : 1 : 1 : 1) \in P^3_\C$.
\par

I am grateful  to J.C.\ Hausmann and S.\ Zimmermann for discussions concerning this example.

\end{exes}

Propositions \ref{propdegmanif} and \ref{propsimplicialvolmap}
will establish \emph{obstructions} to the domination of one manifold by another.
\par

Before Proposition \ref{propdegmanif},
let us recall one definition of Hopf's Umkehrungshomomorphismus \cite{Hopf--30}.

\begin{defn}
% 2.4
\label{defUmkehr}
Let $A$ be an abelian group.
Let $M, N$ be oriented connected closed manifolds of the same dimension. 
For $j \in \{1, \hdots, n\}$, let
$$
P_j \, : \, H^{n-j}(M; A) \longrightarrow H_j(M; A) , 
\hskip.2cm  c \longmapsto [M] \cap c
$$
denote the Poincar\'e duality isomorphism, where $\cap$ stands for the cap product.
Let $f : M \longrightarrow N$ be a continuous map.

The \textbf{Umkehrungshomomorphismus} $f_!$ is defined 
as the composition that makes the diagram
\begin{equation}
\tag{Umkehr}
\label{eqof2.1}
\dia{
H^{n-j}(M; A) \hskip.2cm
\ar@{<-}[d]^{f^{n-j}}
\ar@{->}[r]^{P_j(M)}_{\approx}
& \hskip.2cm H_j(M; A) 
\ar@{<.}[d]^{f_!}
\\
% row 2
H^{n-j}(N; A) \hskip.4cm
\ar@{<-}[r]^(0.58){P_j(N)^{-1}}_{\approx}
& \hskip.2cm H_j(N; A)
}
\end{equation}
commute.
Note that $f_!$ is a ``wrong way map'' in homology;
note also that neither $f_*$ nor $f_!$ need respect
the intersection product in homology.
\end{defn}

\begin{rem}
% 2.5
(i)
The previous definition is highly anachronic, using cohomology, 
as Freudenthal and others did later \cite{Freu--37}.
Indeed, cohomology with its product appeared in 1935 only, 
in communications by Alexander and Kolmogorov 
at the ``Premi\`ere Conf\'erence internationale de Topologie'',
in Moscow,  from 4th to 10th of September,  1935
\cite{Mosc--35, Whey--88}.
\par
In 1930, Hopf could only use homology.
We refer to \cite{Hilt--88} for historical comments,
and to \cite[Section 3.4.1]{Geig--08}
for an exposition of the Umkehrungshomomorphismus in homology.

\vskip.2cm

(ii)
Despite (i), 
we like to think that the following proposition is due to  Hopf.
For (1) and (2) below, 
see respectively (B) on Page 86 and Satz Ib on Page 77 in \cite{Hopf--30}.

\vskip.2cm

(iii)
Digression:
similarly, it was before the discovery of cohomology, and more precisely in 1931,
that de Rham established a conjecture of E.~Cartan and showed that
the dimension of the ``de Rham cohomology'' $H^j_{\text{dR}}(M; \R)$
of a closed $n$-manifold $M$ coincides with its $j$th Betti number.
\end{rem}

\begin{prop}[Hopf]
% 2.6
\label{propdegmanif}
Let $M,N$ be oriented connected closed manifolds of the same dimension, $n$.
Assume that there exists a continuous map $f : M \longrightarrow N$
of degree $d := \deg (f) \ne 0$, i.e.\ that $M$ dominates $N$.
\begin{enumerate}[noitemsep,label=(\arabic*)]
\item\label{1DEpropdegmanif}
The image of the homomorphism 
$$
f_\pi \, : \, \pi_1(M) \longrightarrow \pi_1(N)
$$
is a subgroup of $\pi_1(N)$ of finite index, say $c$,
and $c$ divides $\vert d \vert$.
\item\label{2DEpropdegmanif}
For each $j \in \{0, 1, \hdots, n\}$, the map 
$$
f_j \, : \, H_j(M; \Q) \longrightarrow H_j(N; \Q)
$$
in homology is surjective.
In particular, there is an inequality
$$
\dim (H_j(M; \Q)) \, \ge \,  \dim (H_j(N; \Q))
$$
for the Betti numbers of $M$ and $N$.
\item\label{3DEpropdegmanif}
For each $j \in \{0, 1, \hdots, n\}$, the map 
$$
f^j \, : \, H^j(N; \Q) \longrightarrow H^j(M; \Q)
$$
in cohomology is injective.
\item\label{4DEpropdegmanif}
More precisely, the composition $f_* \circ f_! : H_*(N; \Q) \longrightarrow H_*(N; \Q)$
is multiplication by the number $d$.
\end{enumerate}
\end{prop}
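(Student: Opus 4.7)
The plan is to prove (1) independently via covering spaces, then establish (4) by a projection-formula computation, and finally derive (2) and (3) as immediate consequences of (4). The main tools for the homological statements are the projection formula $f_*(\alpha \cap f^j(\beta)) = f_*(\alpha) \cap \beta$ for the cap product together with $f_n([M]) = d[N]$ from Definition \ref{defdegman}.

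For (1), set $H := f_\pi(\pi_1(M))$ and let $p : \tilde N \longrightarrow N$ be the connected covering of $N$ corresponding to $H$. By the lifting criterion, $f$ lifts to a continuous $\tilde f : M \longrightarrow \tilde N$ with $p \circ \tilde f = f$. If $c := [\pi_1(N) : H]$ were infinite, then $\tilde N$ would be a non-compact connected $n$-manifold, for which $H_n(\tilde N; \Z) = 0$; we would then have $f_n([M]) = p_n(\tilde f_n([M])) = 0$, contradicting $d \ne 0$. Hence $c < \infty$, $\tilde N$ is an oriented connected closed $n$-manifold, and $p$ is a $c$-sheeted covering of degree $\pm c$. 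The multiplicativity of degree (Proposition \ref{degcomposition}(1)) yields $d = \pm c \cdot \deg(\tilde f)$, so $c$ divides $\vert d \vert$.

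For (4), unwinding the diagram (\ref{eqof2.1}) gives $f_! = P_j(M) \circ f^{n-j} \circ P_j(N)^{-1}$. Writing an arbitrary $\eta \in H_j(N;\Q)$ as $\eta = [N] \cap c$ with $c \in H^{n-j}(N;\Q)$, we obtain $f_!(\eta) = [M] \cap f^{n-j}(c)$, and the projection formula gives
$$
f_* f_!(\eta) \;=\; f_*\!\bigl([M] \cap f^{n-j}(c)\bigr) \;=\; f_*([M]) \cap c \;=\; d\,[N] \cap c \;=\; d\,\eta ,
$$
which is (4). Statements (2) and (3) follow at once: since $d \ne 0$, the relation $f_* \circ f_! = d \cdot \operatorname{id}$ forces $f_j$ to be surjective (that is (2), with the Betti number inequality as an immediate consequence) and $f_!$ to be injective; transporting the latter across the Poincaré duality isomorphisms $P_j(M)$, $P_j(N)$ yields the injectivity of $f^{n-j}$ for every $j$, which is (3) after relabelling.

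The only real obstacle is bookkeeping: reading off $f_!$ correctly from the diagram in Definition \ref{defUmkehr} and applying the projection formula in the right direction. Beyond that, no new geometric input is needed — everything rests on the machinery recalled in Section \ref{thecaseofmanifolds}, on the vanishing of the top integral homology for a non-compact connected manifold, and on naturality of the cap product.
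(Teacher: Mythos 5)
Your proposal is correct and follows essentially the same route as the paper: the identical covering-space argument for \ref{1DEpropdegmanif}, and for the homological parts the same key identity $f_j\bigl([M]\cap f^{n-j}(c)\bigr)=f_n([M])\cap c=d\,[N]\cap c$ coming from naturality of the cap product. The only (harmless) difference is organizational: you prove \ref{4DEpropdegmanif} first and deduce \ref{2DEpropdegmanif} and \ref{3DEpropdegmanif} from $f_*\circ f_!=d\cdot\operatorname{id}$, whereas the paper first shows the composite $F_j=f_j\circ P_j(M)\circ f^{n-j}$ is $d$ times Poincar\'e duality and reads off \ref{4DEpropdegmanif} by comparing diagrams.
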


\noindent \emph{Note.}
We could equally use $\R$ instead of $\Q$ as coefficients.
\par

Concerning \ref{3DEpropdegmanif},
we will see below examples where it is important that
$f^*$ embeds the cohomology \emph{algebra} $H^*(N; \Q)$
as a subalgebra of $H^*(M; \Q)$.
In contrast,
the induced maps of \ref{2DEpropdegmanif} \emph{need  not}
preserve the intersection product (think of a map $S^1 \times S^1 \longrightarrow S^2$
of degree $1$).

\begin{proof}
\ref{1DEpropdegmanif}
Let $\Delta$ be the image of the induced homomorphism 
$f_\pi$ from $ \pi_1(M)$ to $\pi_1(N)$,
and let $p : \widetilde N \longrightarrow N$ the covering of $N$ corresponding to $\Delta$.
The map $f$ factorizes as the composition of a lift $\widetilde f : M \longrightarrow \widetilde N$ of $f$
with $p$.
\par

If $\Delta$ were of infinite index in $\pi_1(N)$, 
the manifold $\widetilde N$ would be non-compact,
the homology group $H_n(\widetilde N; \Z)$ the zero group, 
and $f_n = p_n \circ \widetilde f_n$ the zero map,
contradicting $d \ne 0$.
Hence $\Delta$ is of finite index, say of index $c$, in $\pi_1(N)$.
Since the covering $p$ is of degree $c$, 
the degree $d$ of the composition $f = p \circ \widetilde f$
is the product of $\deg (\widetilde f)$ and $c$.

\vskip.2cm

\ref{2DEpropdegmanif}-\ref{3DEpropdegmanif}-\ref{4DEpropdegmanif}
For $j \in \{0, 1, \hdots, n\}$, consider the map $F_j$ defined as that for which the diagram
\begin{equation}
\tag{F}
\label{eqof2.2}
\dia{
H^{n-j}(M; \Q) \hskip.2cm
\ar@{<-}[d]^{f^{n-j}}
\ar@{->}[r]^{P_j(M)}_{\approx}
& \hskip.2cm H_j(M; \Q) 
\ar@{->}[d]^{f_j}
\\
% row 2
H^{n-j}(N; \Q) \hskip.4cm
\ar@{.>}[r]^(0.58){F_j}
& \hskip.2cm H_j(N; \Q)
}
\end{equation}
commutes.
By naturality, $F_j$ is the cup product
$$
c \longmapsto f_n([M]) \cap c .
$$
Since $f_n([M]) =  d [N]$ with $d \ne 0$, 
the map $F_j$ is a non-zero multiple of the Poincar\'e duality isomorphism $P_j(N)$,
and in particular $F_j$ is an isomorphism.
It follows that $f^{n-j}$ is injective and $f_j$ surjective.
\par
By comparing the diagrams (\ref{eqof2.1}) and (\ref{eqof2.2}),
we see that $f_* \circ f_!$ is multiplication by $d$.
\end{proof}

\begin{rem}
% 2.7
Domination is often \emph{thought of} as a \textbf{partial order} on
the set $\mathcal M(n)$ of (homotopy types of) 
oriented connected closed manifolds of some dimension $n$,
for which $S^n$ is an absolutely minimal element.
But it is not quite a partial order,
as shown by $S^3$ and $S^3 / \Gamma$ as in Example
\ref{exdomin}(3).
\par

Note that, in general, two manifolds in $\mathcal M(n)$
are not comparable for this ``partial order'', because neither of them dominates the other. 
\par

In the case of aspherical $3$-manifolds,
this order has been extensively studied by Wang \cite{Wang--91};
see also \cite{KoNe--13}.
For non-hyperbolic $4$-manifolds, see \cite{Neof}.

This ``partial order'' could be extended to pairs of oriented connected closed manifolds
of possibly different dimensions, as suggested in \cite{CaTo--89}: 
$M$ \textbf{dominates} $N$ if there exists a continuous map
from $M$ to $N$ which is surjective in rational homology.
\end{rem}

\begin{exes}
% 2.8
\label{exapresUmkehr}
(1) 
If $N$ is a manifold  dominated by $S^n$,
then $\pi_1(N)$ is finite, and $N$ is a rational homology sphere,
respectively by \ref{1DEpropdegmanif} and \ref{3DEpropdegmanif}
in Proposition \ref{propdegmanif}.
\par
Observe that $S^3$ dominates infinitely many pairwise non-homeomorphic manifolds,
quotients of $S^3$ by finite cyclic groups of all orders.

\vskip.2cm

(2)
The converse to a particular case of (1) holds as follows:
\emph{A simply connected rational homology $n$-sphere $N$ is dominated by $S^n$.}
This was shown to me by C.\ Neofytidis, with the following argument.
For $j \in \{2, \hdots, n-1\}$, the homology groups $H_j(N; \Z)$
are in the Serre class $\mathcal C$ of finite abelian groups
(see the discussion and the references in \cite{Neof--14}).
The Hurewicz theorem modulo the class $\mathcal C$ implies that
the Hurewicz homomorphism $h : \pi_n(N) \longrightarrow H_n(N; \Z)$
is an isomorphism modulo $\mathcal C$;
in particular, the image of $h$ is not $\{0\}$, and thus $S^n$ dominates $N$.

\vskip.2cm

(3)
Let $\Sigma_g$ and $\Sigma_h$ be 
oriented connected closed surfaces, of genus $g$ and $h$ respectively,
as in Example \ref{exdeg}(5).
If there exists a continuous map $\Sigma_g \longrightarrow \Sigma_h$
of non-zero degree, then $g \ge h$, 
by Proposition \ref{propdegmanif}\ref{2DEpropdegmanif}.
This together with Example \ref{exdomin}(5) shows that
$\Sigma_g$ dominates $\Sigma_h$ if and only if $g \ge h$.

\vskip.2cm

(4) 
None of the manifolds $S^2 \times S^4$ and $P^3_{\C}$ dominates the other.
This follows from \ref{3DEpropdegmanif} in Proposition \ref{propdegmanif},
but not from \ref{2DEpropdegmanif}.
\par
Indeed, the cohomologies
$$
\aligned
H^*(S^2 \times S^4; \Q) \, &= \,  1 \Q \oplus c^S_2 \Q  \oplus c^S_4 \Q  \oplus [S^2 \times S^4]^* \Q
\\
&\simeq \, \Q[x,y] / (x^2, y^2) ,
\\
H^*(P^3_{\C}; \Q) \, &= \,  1  \Q \oplus c^P_2 \Q  \oplus c^P_4 \Q  \oplus [P^3_{\C}]^* \Q 
\\
&\simeq \, \Q[z] / (z^4) ,
\endaligned
$$
are both generated, additively, by classes of degree $0, 2, 4, 6$.
But neither of these algebras injects as a subalgebra in the other, 
since $(c^S_2)^2 = 0$ and $(c^P_2)^2 = c^P_4 \ne 0$.

\vskip.2cm

(5)
Recall first that, if $M = M_1 \sharp M_2$ 
is a connected sum of two oriented connected manifolds,
$H_j(M; \Z) \simeq H_j(M_1; \Z) \oplus H_j(M_2; \Z)$ for all $j$ with $1 \le j \le n-2$~;
if $M_1$ and $M_2$ are orientable, this holds also for $j = n-1$.
The proof involves 
the long exact sequence for homology of the pair $(M_1 \sharp M_2, S^{n-1})$, 
showing that $H_j(M_1 \sharp M_2; \Z)$ and $H_j(M_1 \vee M_2; \Z)$ are isomorphic for $j \le n-2$,
and the Mayer-Vietoris exact sequence showing that
$H_j (M_1 \vee M_2; \Z) \simeq H_j(M_1; \Z) \oplus H_j(M_2; \Z)$ for all $j \ge 1$.
Recall that the wedge sum 
$M_1 \vee M_2$ is the space obtained from the disjoint union $M_1 \sqcup M_2$
by identifying \emph{one} point of $M_1$ with one of $M_2$.
\par

Let $M$ be the connected sum of two copies of $P^2_{\C}$ with the same orientation, 
so that the intersection form is definite.
Let $N$ be the connected sum of two copies of $P^2_{\C}$ with opposite orientations, 
so that the intersection form is indefinite.
Then $M$ and $N$ have the same fundamental group (which is trivial)
and the same additive homology and cohomology groups.
\par

The manifold $M$ does not dominate $N$, 
and $N$ does not dominate $M$, 
because none of
$$
\aligned
H^*(M, \Q) \, = \, 
H^*(P^2_\C \sharp P^2_\C; \Q) \, &\simeq \, \Q [x,y]/(x^2-y^2, x^3, y^3)
\\
H^*(N, \Q) \, = \, 
H^*(P^2_\C \sharp \overline{P^2_\C}; \Q) \, &\simeq \,  \Q [x,y]/(x^2+y^2, x^3, y^3)
\endaligned
$$
is isomorphic to a subalgebra of the other.
\par

I am grateful to D.\ Kotschick for this example.
More generally, intersection forms play an important role 
for results and examples of domination for $4$-manifolds \cite{DuWa--04}.
\end{exes}

The following question has has attracted a lot of attention.

\begin{que}
% 2.9
Let $n$ be an integer, $n \ge 3$.
Let $M$ be an orientable connected closed $n$-manifold,
and $\mathcal N$ a class of such manifolds.
Decide whether the number of $N$ in $\mathcal N$ dominated by $M$ is finite, 
up to homotopy equivalence,
or up to homeomorphism when $n \le 3$.

\par
There is a natural variant of the question with $1$-domination.
\end{que}

As a small sample of known answers to this kind of questions, 
we quote the following results.

\begin{thm}
% 2.10
\label{thmBWZetc}
(1)
For $n \ge 2$, every hyperbolic oriented connected closed $n$-manifold
dominates a finite number only (up to homeomorphism) of such manifolds.
\par

(2)
Every oriented connected closed $3$-manifold
$1$-dominates a finite number only (up to homeomorphism)
of \emph{geometrizable} oriented connected closed $3$-manifolds.
\par

(3)
Every oriented connected closed $3$-manifold
dominates a finite number only (up to homeomorphism)
of irreducible non-geometrizable oriented connected closed $3$-manifolds.
\end{thm}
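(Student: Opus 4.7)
The plan is to use Gromov's simplicial volume as the main invariant, supplemented by classical finiteness results for hyperbolic manifolds of bounded volume, and by Thurston-Perelman geometric decomposition for dimension three.

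For part (1), I would argue as follows. Given a hyperbolic oriented connected closed $n$-manifold $M$, suppose $f \colon M \longrightarrow N$ is a continuous map of non-zero degree onto another such $N$. By the monotonicity of simplicial volume under non-zero degree maps (to be established as Proposition~\ref{propsimplicialvolmap} in Section~\ref{secsimvol}), one has $\lvert \deg(f) \rvert \cdot \|N\| \le \|M\|$, and in particular $\|N\| \le \|M\|$. By the Gromov-Thurston proportionality principle for closed hyperbolic manifolds, $\|N\| = c_n \, \mathrm{Vol}(N)$ for a universal positive constant $c_n$. Hence $\mathrm{Vol}(N) \le \mathrm{Vol}(M)/c_n$. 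It remains to invoke the finiteness of closed hyperbolic $n$-manifolds of bounded volume: in dimension $n=2$ the Gauss-Bonnet identity $\mathrm{Vol}(N) = 2\pi(2h-2)$ directly bounds the genus $h$; in dimension $n=3$, the Jørgensen-Thurston theorem gives finiteness up to isometry, which Mostow rigidity upgrades to homeomorphism; in dimensions $n \ge 4$ the same conclusion follows from H.-C.~Wang's finiteness theorem combined with Mostow rigidity.

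For parts (2) and (3), the strategy is more delicate because the target $N$ may have a non-trivial JSJ decomposition whose pieces sometimes have vanishing simplicial volume. I would apply the geometric decomposition to $N$, writing it as a union along incompressible tori of geometric pieces, and bound each piece separately. Each hyperbolic piece is controlled by a relative version of the simplicial-volume argument above. The Seifert pieces carrying the $\widetilde{\mathrm{SL}_2(\R)}$ geometry are controlled by Brooks-Goldman's Seifert volume, which also enjoys a domination inequality. The remaining Seifert geometries ($\mathrm{Nil}$, $\R^3$, $S^2 \times \R$, $S^3$, $\mathrm{Sol}$, $\mathbf{H}^2 \times \R$) carry no non-trivial ``volume'' and must be controlled by combinatorial and cohomological invariants: base orbifold Euler characteristic, Euler number of the Seifert fibration, and the rank and combinatorics of the JSJ graph. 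In case (2) the hypothesis $\deg(f) = \pm 1$ forces the induced map $f_\pi$ (and $f_\ast$ on integral $H_1$) to be surjective, which sharpens the numerical bounds enough to conclude directly. In case (3), the irreducibility of $N$ together with the exclusion of a single geometric structure on the whole manifold is what allows one uniformly to bound all JSJ pieces of $N$ in terms of invariants of the fixed source $M$; the 3-manifold techniques of Wang \cite{Wang--91} and the subsequent literature \cite{KoNe--13} supply the requisite estimates.

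The main obstacle is precisely the passage between these different invariants across the JSJ decomposition. Simplicial volume alone is not enough: although it is additive along gluings over incompressible tori (Gromov's cutting theorem), it vanishes on most Seifert pieces, so one genuinely needs the Brooks-Goldman Seifert volume as a parallel monotone invariant and must verify joint boundedness under a domination. The hardest technical point is then controlling the combinatorial type of the JSJ graph of the target — the number of pieces, the homeomorphism types of each piece, and the finitely many possible gluing isomorphisms — since, unlike the purely hyperbolic case, no single invariant of $M$ directly bounds all of these data at once. This is what separates the clean argument in (1) from the mixed-geometry arguments needed in (2) and (3).
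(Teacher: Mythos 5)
Your scheme for part (1) — monotonicity of the simplicial volume, proportionality, then finiteness of closed hyperbolic manifolds of bounded volume — is exactly how the paper handles $n=2$ (via Example \ref{exapresUmkehr}(3)) and $n\ge 4$ (via \cite[Theorem 8.1]{Wang--72}). But your treatment of $n=3$ contains a genuine gap: it is \emph{false} that a volume bound gives finiteness of closed hyperbolic $3$-manifolds up to isometry. The J\o rgensen--Thurston theory says the volume spectrum is well ordered and each value is attained finitely often, but infinitely many closed hyperbolic $3$-manifolds can lie below a fixed bound: the volumes of the hyperbolic Dehn fillings of a single cusped manifold accumulate from below at the cusped volume (e.g.\ the fillings of the figure-eight knot complement). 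This is precisely why Wang's finiteness theorem is stated for $n\ge 4$ and why the paper does not argue via a volume bound in dimension $3$ but instead cites Soma \cite{Soma--98} (with the correction history \cite{BoWa--96}, \cite{BoRW--14}); Soma's finiteness result requires substantially more than the inequality $\operatorname{Vol}(N)\le \operatorname{Vol}(M)/c_3$, typically a geometric-limit argument ruling out an infinite sequence of dominated targets. (A minor side remark: once you have finiteness up to isometry you have finiteness up to homeomorphism for free; Mostow rigidity is needed for the converse direction, not the one you use.)

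For parts (2) and (3) you should be aware that the paper offers no proof at all: it simply refers to \cite[Theorem 1.1]{Wang--02} and \cite[Theorem 1.1]{Liu}. Your sketch correctly identifies the circle of ideas (geometric decomposition of the target, simplicial volume for hyperbolic pieces, Seifert volume and combinatorial control of the JSJ graph, the extra leverage of degree $\pm 1$ in (2)), but as written it is a programme rather than an argument — the ``requisite estimates'' you defer to Wang and the subsequent literature are the actual content of those theorems. So the honest status of your proposal is: a correct and essentially standard proof of (1) for $n=2$ and $n\ge 4$, a broken step for $n=3$ that must be replaced by Soma's theorem, and for (2)--(3) an outline at the same level of detail as the paper's citations.
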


Claim (1) has already been observed for $n=2$ in Example \ref{exapresUmkehr}(3).
For $n \ge 4$, it follows from the fact that, for every $V > 0$, there are
up to homeomorphism (indeed up to isometry)
finitely many hyperbolic orientable connected closed manifolds of volume at most $V$
(a particular case of \cite[Theorem 8.1]{Wang--72}),
and from Gromov's work on simplicial volumes.
For $n=3$, we refer to \cite{Soma--98};
note that  \cite[Theorem 3.4]{BoWa--96}
has been corrected \cite[Remark 1.5]{BoRW--14}.
For (2) and (3), see respectively
\cite[Theorem 1.1]{Wang--02} and \cite[Theorem 1.1]{Liu}.

\begin{thm}[Boyer-Rolfsen-Wiest]
% 2.11
\label{thmRolfsen} 
Let $M,N$ be two oriented  connected closed $3$-manifolds.
Suppose that $M$ is prime, $\pi_1(N)$ left-orderable and non-trivial,
and $\pi_1(M)$ not left-orderable.
\par
Then $M$ does not dominate $N$.
\end{thm}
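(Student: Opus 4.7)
The plan is to argue by contradiction, combining Proposition \ref{propdegmanif}\ref{1DEpropdegmanif} with the main algebraic theorem of Boyer--Rolfsen--Wiest on orderability of fundamental groups of $P^2$-irreducible $3$-manifolds.

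Suppose, for contradiction, that there exists a continuous map $f : M \longrightarrow N$ with $d := \deg (f) \ne 0$. By Proposition \ref{propdegmanif}\ref{1DEpropdegmanif}, the image $\Delta := f_\pi (\pi_1(M))$ is a subgroup of finite index in $\pi_1(N)$. Since $\pi_1(N)$ is left-orderable and non-trivial, it is torsion-free and hence infinite; so its finite-index subgroup $\Delta$ is itself infinite, in particular non-trivial. Moreover $\Delta$ inherits a left-invariant total order by restriction, so $\Delta$ is a non-trivial left-orderable group. As $\Delta$ is by construction a quotient of $\pi_1(M)$ (namely $\pi_1(M)/\ker f_\pi$), I conclude that $\pi_1(M)$ admits a non-trivial left-orderable quotient.

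Now I invoke the structural hypothesis on $M$. Since $M$ is prime and orientable, either $M$ is irreducible or $M \simeq S^2 \times S^1$. The latter has $\pi_1 \simeq \Z$, which is left-orderable, contrary to hypothesis; so $M$ is irreducible, and being orientable it is $P^2$-irreducible. The key input is then the theorem of Boyer--Rolfsen--Wiest asserting that for a compact connected $P^2$-irreducible $3$-manifold $M$, the group $\pi_1(M)$ is left-orderable as soon as it has a non-trivial left-orderable quotient. Applying this to the quotient $\Delta$ constructed above yields that $\pi_1(M)$ itself is left-orderable, in direct contradiction with the standing hypothesis. This contradiction shows that no map of non-zero degree $M \longrightarrow N$ can exist, i.e.\ $M$ does not dominate~$N$.

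The elementary steps are the passage from a non-zero degree map to a finite-index image in $\pi_1(N)$ (already recorded in Proposition \ref{propdegmanif}), and the hereditary behaviour of left-orderability under taking subgroups. The genuinely hard step, which I would simply quote rather than reprove, is the Boyer--Rolfsen--Wiest lifting theorem from the quotient to $\pi_1(M)$; it rests on the geometric decomposition of $P^2$-irreducible $3$-manifolds and is specific to dimension three, and indeed it is the reason that the hypothesis ``$M$ prime'' (rather than just ``$M$ orientable closed'') is needed in the statement.
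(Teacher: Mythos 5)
Your argument is correct. Note that the paper itself gives no proof of this theorem; it only cites Rolfsen's 2004 paper and Boyer--Rolfsen--Wiest (2005), and your proof is essentially the argument found there: a non-zero degree map forces, via Proposition \ref{propdegmanif}\ref{1DEpropdegmanif}, a finite-index (hence infinite, hence non-trivial) image $\Delta$ in the left-orderable group $\pi_1(N)$, and the Boyer--Rolfsen--Wiest criterion for compact connected $P^2$-irreducible $3$-manifolds then upgrades the resulting non-trivial left-orderable quotient of $\pi_1(M)$ to a left order on $\pi_1(M)$ itself, contradicting the hypothesis. Your reductions are also correctly handled: a prime orientable closed $3$-manifold is either irreducible (hence $P^2$-irreducible, as an orientable manifold contains no two-sided $\R P^2$) or $S^1 \times S^2$, whose fundamental group $\Z$ is left-orderable and so is excluded; left-orderability passes to subgroups; and a non-trivial left-orderable group is torsion-free, hence infinite. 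One small remark: finite index of the image is more than you need --- non-triviality of $f_\pi(\pi_1(M))$ suffices to feed the Boyer--Rolfsen--Wiest criterion --- but invoking Proposition \ref{propdegmanif}\ref{1DEpropdegmanif} is the cleanest way to get it with the tools of this paper. Your closing observation that primeness is genuinely needed is also accurate (for instance a connected sum of the Poincar\'e sphere with the $3$-torus dominates the $3$-torus with degree $1$, has non-left-orderable fundamental group because of torsion in the free factor, and yet $\Z^3$ is left-orderable).
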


Recall that $M$ is prime if,
whenever $M$ is a connected sum $M_1 \sharp M_2$,
one of $M_1$, $M_2$ is a $3$-sphere.
By the Kneser-Milnor theorem, every oriented connected closed $3$-manifold
is a connected sum of prime $3$-manifolds, uniquely up to order and homeomorphism.
Theorem \ref{thmRolfsen} appears in
\cite[Theorem 1.1]{Rolf--04} and \cite[Theorem 3.7]{BoRW--05}.

\begin{thm}[\cite{Sun--15}]
% 2.12
\label{thmSun}
Let $M, N$ be oriented connected closed $3$-manifolds; 
suppose that $M$ is hyperbolic.
\par
Then there exists a finite covering $\widetilde M \longrightarrow M$
and a continuous map $\widetilde M \longrightarrow N$ of degree $2$.
In other words, $M$ virtually $2$-dominates 
every oriented connected closed $3$-manifold. 
\end{thm}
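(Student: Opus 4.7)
The plan is to reduce the problem via the structural theorems for $3$-manifolds and then to exploit the recent deep results about hyperbolic $3$-manifold groups (Kahn--Markovic surface subgroups, Agol's virtual special / LERF theorems) to access a finite cover of $M$ whose topology is rich enough to build a degree $2$ map to $N$.

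First, using the Kneser--Milnor prime decomposition, write $N = N_1 \sharp \cdots \sharp N_k$. As the connected sum admits a natural pinch map to each summand (Example \ref{exdomin}(4)) and degrees multiply under composition (Proposition \ref{degcomposition}(1)), the problem essentially reduces to constructing, from a single finite cover $\widetilde M \to M$, compatible maps onto a wedge-like model of $N$ whose pieces contribute degrees that assemble into a total degree $\pm 2$. Each prime factor $N_i$ admits, by Geometrization, a JSJ decomposition into Seifert fibered and hyperbolic pieces glued along incompressible tori, and the aim is to realize the JSJ pattern of $N$ inside a suitable finite cover of $M$.

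Next, by the Kahn--Markovic surface subgroup theorem, the closed hyperbolic $M$ contains a rich family of immersed $\pi_1$-injective quasi-Fuchsian surfaces. By Agol's theorem that $\pi_1(M)$ is LERF (virtually special, after Wise), each such immersed surface lifts to an embedded incompressible surface in some finite cover $\widetilde M$. A careful choice of surfaces produces in $\widetilde M$ an embedded system whose complementary blocks can be matched one-to-one with the JSJ blocks of the factors of $N$. On each matched block one constructs a map to the corresponding piece of $N$: on Seifert blocks via maps of circle bundles and surface maps as in Example \ref{exdeg}(5); on hyperbolic blocks by exploiting the abundance of embedded quasi-Fuchsian surfaces to perform a controlled folding / branched covering construction. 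The pieces are then glued across the interfacial surfaces into a continuous map $\widetilde M \to N$.

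The hardest part will be controlling the degree \emph{exactly}. Arranging for the total degree to be $\pm 2$ (rather than some uncontrolled nonzero integer) forces one to concentrate all the doubling into a single well-chosen block and to have every other block contribute a piece of degree $\pm 1$, all while the signs and the gluings conspire so that Definition \ref{defdegman} yields $\pm 2$. Making the Kahn--Markovic / LERF machinery simultaneously respect every JSJ torus, every Seifert fibration, and every gluing involution of $N$ is the technical heart of Sun's argument, and it is precisely here that the full strength of virtual specialness of closed hyperbolic $3$-manifold groups is required; by contrast, Proposition \ref{propdegmanif} and the Hopf/Umkehr machinery play essentially no role in the construction itself, only in verifying \emph{a posteriori} that the degree of the map is what it should be.
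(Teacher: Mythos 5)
The paper does not prove this statement at all: it is quoted, with a citation, from Sun's research article \cite{Sun--15}, whose proof occupies a full paper. So the only question is whether your sketch could stand on its own, and it cannot: at every point where the actual content of the theorem lives, the proposal records that something must be arranged rather than arranging it. Two steps are not just unproven but misdirected. First, the plan to ``realize the JSJ pattern of $N$ inside a suitable finite cover of $M$'' by matching complementary blocks of embedded surfaces with the JSJ blocks of the prime summands of $N$ is structurally incoherent: every finite cover $\widetilde M$ of a closed hyperbolic manifold is again closed hyperbolic, hence atoroidal, and cutting it along embedded lifts of Kahn--Markovic quasi-Fuchsian surfaces produces pieces bounded by higher-genus surfaces, not a torus decomposition; these pieces bear no relation to Seifert or hyperbolic JSJ blocks of $N$, and nothing in the proposal makes the block-by-block maps agree along the interface surfaces, which is exactly what a glued continuous map requires. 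Second, the reduction through the Kneser--Milnor decomposition is backwards as stated: the pinch maps of Example \ref{exdomin}(4) go from $N$ to its summands, and to assemble a map \emph{into} $N$ of a prescribed degree one needs disjoint regions of $\widetilde M$ mapping onto the punctured summands with the \emph{same} degree, compatibly on the separating spheres, plus a collapsing argument for the rest of $\widetilde M$; none of this is addressed. Finally, the exact value $\pm 2$ of the degree --- the whole point of the statement, and the reason the theorem is not just Example \ref{exdomin}(6) --- is explicitly deferred (``the hardest part will be controlling the degree exactly''), so no proof of even non-zero degree is actually given.

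For contrast, Sun's construction does not cut $\widetilde M$ into pieces mirroring $N$. Roughly, one first builds by hand a map from (a neighborhood of) a carefully chosen $2$-complex, assembled from surfaces with boundary, onto $N$ with prescribed local degree $2$; the Kahn--Markovic good-pants machinery is then used to realize this complex nearly geodesically and $\pi_1$-injectively in the hyperbolic manifold $M$; LERF (Agol--Wise, via Scott's criterion) lifts it to an embedding in a finite cover $\widetilde M$; and the map is extended to all of $\widetilde M$ by collapsing the complement of a regular neighborhood, so that the degree is computed locally around the complex rather than distributed over a decomposition. Your sketch correctly names the big inputs (Kahn--Markovic, virtual specialness/LERF), but the intermediate object that makes the degree computable --- the mapped-in complex with controlled local degree, together with the collapse of everything else --- is missing, and it is precisely that object which turns the machinery into a proof.
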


For ``most'' geometrizable $3$-manifolds $M$, 
the possible degrees of a continuous map from $M$ to itself are in $\{-1, 0, 1\}$.
Exceptions include manifolds covered by a torus bundle over the circle, 
or covered by $\Sigma_g \times S^1$ for $g \ge 2$,
or covered by one of $S^3$, $S^2 \times \R$.
Possible degrees of self-mappings of these ``exceptions'' 
are determined in \cite{SWWZ--12}.

\vskip.2cm

Attention has also been given to
domination between $3$-manifolds with boundaries;
as a sample, we cite only \cite{BBRW} and references there,
on domination of knot exteriors.

\vskip.2cm

To conclude this section, 
we describe some results of Gaifullin \cite{Gaif--08a, Gaif--08b, Gaif--13}
for manifolds of dimensions not necessarily $3$.
Claim (1) below involves the Tomei manifolds, defined as follows;
see \cite{Tome--84}, as well as \cite{Davi--87}.
\par
For $n \ge 1$, choose $n+1$ pairwise distinct real numbers,
say $\lambda_1, \hdots, \lambda_{n+1}$, 
and define the Tomei manifold $M_0^n$ as the space of
symmetric real matrices $(a_{i,j})_{1 \le i,j \le n+1}$ which are
tridiagonal, i.e.\ $a_{i,j} = 0$ whenever $\vert j-i \vert \ge 2$,
and with eigenvalues $\lambda_1, \hdots, \lambda_{n+1}$.
It can be shown that
\begin{enumerate}[noitemsep,label=(\roman*)]
\item\label{iDEtomeidavis}
$M_0^n$ is an orientable connected closed smooth $n$-manifold,
and its diffeomorphism type is independent of the choice of the $\lambda_i$ 's;
\item\label{iiDEtomeidavis}
$M_0^n$ is aspherical;
\item\label{iiiDEtomeidavis}
$\pi_1(M_0^n)$ is explicitely described in \cite{Davi--87} as a torsion-free finite-index
subgroup of a Coxeter group with $2n$ generators;
\item\label{ivDEtomeidavis}
$M_0^1$ is the circle, and $M_0^2$ is an orientable closed surface of genus $2$.
\end{enumerate}

\begin{thm}[Gaifullin]
% 2.13
\label{thmGaifullin}
(1)
For every $n \ge 2$, there exists an oriented connected closed $n$-manifold,
indeed the manifold $M_0^n$ described above,
such that every oriented connected closed $n$-manifold is dominated
by some finite cover of $M_0^n$.

(2)
For $n \in \{2,3,4\}$, 
there exists a hyperbolic oriented connected closed $n$-manifold 
of the form $H^n/\Gamma$
such that every oriented connected closed $n$-manifold is dominated
by some finite cover of $H^n/\Gamma$.
\end{thm}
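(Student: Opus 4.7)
The plan is to establish (1) by showing the Tomei manifold $M_0^n$ is a \emph{universal realizer of cycles}: for every oriented connected closed $n$-manifold $N$, some finite cover of $M_0^n$ admits a continuous map of non-zero degree to $N$. I would exploit the intrinsic combinatorial structure of $M_0^n$ that comes from \ref{iiiDEtomeidavis}: the Coxeter action on a permutohedral tiling of $M_0^n$ (the permutohedron being the fundamental domain determined by a reordering of eigenvalues) lifts to a canonical tiling of every finite regular cover. First, fix a triangulation of $N$ expressing $[N]$ as a signed sum of oriented $n$-simplices. Second, choose a finite regular cover $\pi \colon \widetilde{M} \to M_0^n$ whose canonical tiling (suitably subdivided) is fine enough to admit a combinatorial map $f \colon \widetilde{M} \to N$ sending tiles of $\widetilde{M}$ to top simplices of $N$. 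Third, compute $\deg(f)$ from Definition \ref{defdegmanprov} by counting signed preimages at a regular value in the interior of some $n$-simplex of $N$; by enlarging the cover and modifying $f$, arrange that this count is non-zero.

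For part (2), the case $n=2$ is immediate from Example \ref{exdomin}(5): any closed hyperbolic surface $\Sigma_g$ with $g \ge 2$ has finite covers of arbitrarily high genus, and a high-genus surface dominates every closed oriented surface of lower genus, so any $H^2/\Gamma$ works. The case $n=3$ is stronger than required and already contained in Theorem \ref{thmSun}: every closed oriented hyperbolic $3$-manifold virtually $2$-dominates every closed oriented $3$-manifold, so any hyperbolic $H^3/\Gamma$ works. For $n=4$, I would run the argument of (1) verbatim with a suitable arithmetic hyperbolic $4$-manifold $H^4/\Gamma$ in place of $M_0^4$. The natural candidate is a manifold obtained from a torsion-free finite-index subgroup of the reflection group of a right-angled convex polytope in $H^4$ (for instance the right-angled $120$-cell); such a manifold carries a canonical tiling by isometric copies of the polytope, precisely mirroring the role of the permutohedral tiling of $M_0^n$, and the realization argument of part (1) can be transported to this setting.

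The main obstacle is the realization step in part (1): given a triangulated $N$, producing a finite cover of $M_0^n$ together with a combinatorial map of non-zero degree. The essential difficulty is \emph{sign control} --- one must choose the cover so that positive and negative preimages of a regular value do not cancel out to zero. Gaifullin handles this by induction on $n$, tracking colored combinatorial structures on both sides and passing to specific covers that kill the cohomological obstructions which would otherwise force the algebraic count to vanish; the same inductive pattern, adapted to the right-angled polytope, governs the hyperbolic $4$-dimensional case. In contrast, the weaker conclusion that \emph{some} oriented closed $n$-manifold is URC would admit a much shorter proof; pinning down the Tomei manifold specifically is what requires the combinatorial tiling machinery described above.
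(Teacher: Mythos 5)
The paper does not prove this theorem at all: it is quoted as a result of Gaifullin, with a pointer to \cite{Gaif--08a, Gaif--08b, Gaif--13}, plus two remarks (claim (1) is elementary for $n=2$, and it is open whether (2) extends to $n \ge 5$). So there is no proof in the paper to match yours against; the question is whether your sketch stands on its own, and it does not. The parts that do work are the easy ones: for $n=2$ any hyperbolic surface works, since its finite covers have arbitrarily large genus and $\Sigma_{g'}$ dominates $\Sigma_h$ whenever $g' \ge h$; and for $n=3$ invoking Theorem \ref{thmSun} is logically legitimate (though it is a much later and much harder theorem than what is needed, so as a ``proof'' of Gaifullin's 2008 statement it is anachronistic and replaces one black box by another).

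The genuine gap is in part (1) and in the $n=4$ case of part (2). The whole content of the theorem is exactly the step you flag as ``the main obstacle'': given a triangulated $N$, produce a \emph{specific} finite cover of $M_0^n$ and a map to $N$ whose signed count of preimages over a regular value is non-zero. Saying ``by enlarging the cover and modifying $f$, arrange that this count is non-zero'' and then ``Gaifullin handles this by induction on $n$, tracking colored combinatorial structures'' is not an argument; it defers the entire mathematical content to the very reference being proved, so the proposal is a roadmap, not a proof. (For what it is worth, the roadmap does match the actual strategy: Gaifullin's proof runs through the combinatorics of the permutohedral tiling/small-cover structure of $M_0^n$, and the hyperbolic examples in dimensions $3$ and $4$ come from right-angled polytopes such as the dodecahedron and the $120$-cell.) Similarly, ``run the argument of (1) verbatim'' for the right-angled $120$-cell manifold hides a real lemma: one must either redo the inductive realization argument for that tiling or prove a transfer statement of the type Gaifullin actually uses, namely that a manifold some finite cover of which admits a non-zero degree map onto a URC manifold (here $M_0^4$) is itself URC. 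Neither is supplied, and the dimension restriction $n \le 4$ in (2) --- which reflects the non-existence of compact right-angled hyperbolic polytopes in higher dimensions --- never enters your argument explicitly, which is a sign that the decisive mechanism has not been engaged.
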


Observe that Claim (1) is straightforward for $n=2$,
and true (but completely trivial) for $n=1$.
In (2), $H^n$ denotes the hyperbolic space of dimension $n$ 
and constant \mbox{curvature $-1$,}
and $\Gamma$ a torsion-free cocompact discrete subgroup of the isometry group of $H^n$.
As far as I know, it is unknown whether (2) extends to dimensions $n \ge 5$.
\vskip.2cm

Gaifullin considers also the more general problem of realising homology classes
by manifolds (see Remark \ref{SteenrodThom} below), 
but this goes beyond the scope of the present exposition.

\section{Simplicial volume}
% section3
\label{secsimvol}

Proposition \ref{propsimplicialvolmap} provides another obstruction to domination
of one manifold by another.
Simplicial volumes appear in \cite{Grom--82}; see also Chapter~6 of  \cite{Thur--80},
entitled ``Gromov's invariant and the volume of a hyperbolic manifold'', and \cite{Loh--11}.

\begin{defn}
% 3.1
\label{defsimplicialvol}
The \textbf{simplicial volume} $\Vert M \Vert_1$ of an oriented connected closed $n$-manifold $M$
is the $\ell^1$-semi-norm of the fundamental class $[M]$ in $H_n(M; \R)$,
i.e.\ the infimum of $\sum_k \vert h_k \vert$ over all singular cycles
$\sum_k h_k \sigma_k$ representing the fundamental class $[M] \in H_n(M; \R)$.
\par
More generally, for a topological space $X$ and an integer $j \ge 0$, 
every homology class $h \in H_j(X; \R)$ has a \textbf{$\ell^1$-semi-norm}
$\Vert h \Vert_1$ defined as the infimum of $\sum_k \vert h_k \vert$ over all singular cycles
$\sum_k h_k \sigma_k$ representing $h$.
\par
(Here, each $\sigma_k$ stands for a singular $j$-simplex, i.e.\  for 
a continuous map from the standard $j$-simplex
$\{ (x_0, \hdots, x_j) \in \R^{j+1} \mid x_0 \ge 0, \hdots, x_j \ge 0, \sum_{i=0}^j x_i = 1 \}$
to the space $X$).
\end{defn}

\begin{rem}
% 3.2
\label{boundedcoh}
Dually, a cohomology class $c \in H^j(X; \R)$ has a (possibly infinite!) $\ell^\infty$-semi-norm
$\Vert c \Vert_\infty$ defined as 
the infimum of $\Vert z \Vert_\infty$ over all singular cocycles $z$ 
representing $c$, and $\Vert z \Vert_\infty := \sup_\sigma \vert z(\sigma) \vert$,
where $\sup_\sigma$ stands for the supremum over the set of all singular simplices
$\sigma : \Delta^j \longrightarrow X$.
(It is natural to ask whether this semi-norm is a norm \cite[Page 38]{Grom--82}.
The answer is positive for $j \le 2$ \cite{MaMo--85}, but negative in general \cite{Soma--97}.)
\par

Let $M$ be as in the previous definition,
and $[M]^* \in H^n(M; \R)$ its dual fundamental class, as in \ref{dualFCZ}.
Then $\Vert [M]^* \Vert_\infty = \left( \Vert M \Vert_1 \right)^{-1}$;
in particular, $[M]^*$ is a bounded class if and only if $\Vert M \Vert_1 > 0$
\cite[Corollary on Page 17]{Grom--82}.
\end{rem}

The following basic and elementary proposition appears in \cite[Page 8]{Grom--82}.

\begin{prop}
% 3.3
\label{propsimplicialvolmap} 
Let $M,N$ be two oriented connected closed manifolds of the same dimension,
and $\varphi : M \longrightarrow N$ a continuous map. Then
$$
\Vert M \Vert_1  \, \ge \, 
\vert \deg   (\varphi) \vert  \hskip.1cm \Vert N \Vert_1  .
$$
If, moreover, $\varphi$ is a $d$-sheeted covering for some $d \ge 1$, 
then 
$$
\Vert M \Vert_1 \, = \,  d \Vert N \Vert_1 .
$$
\end{prop}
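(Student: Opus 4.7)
The plan is to get the inequality by pushing chains forward along $\varphi$, and the equality in the covering case by lifting chains from $N$ to all sheets of $M$.

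For the first assertion, I would start with an arbitrary singular cycle $z = \sum_k h_k \sigma_k$ representing the fundamental class $[M] \in H_n(M;\R)$. Applying $\varphi$ pointwise to the simplices produces the pushed-forward cycle $\varphi_\#(z) = \sum_k h_k (\varphi \circ \sigma_k)$, which represents the homology class $\varphi_n([M]) = \deg(\varphi)\,[N]$ by Definition \ref{defdegman} (the definition transferring from $\Z$ to $\R$ coefficients without change). Since the $\ell^1$-semi-norm on chains can only decrease or stay the same under $\varphi_\#$ (we have the same coefficients on possibly fewer simplices after collecting like terms),
\begin{equation*}
\sum_k |h_k| \;\ge\; \bigl\| \varphi_\#(z) \bigr\|_1 \;\ge\; \bigl\| \deg(\varphi)[N] \bigr\|_1 \;=\; |\deg(\varphi)| \cdot \|N\|_1 .
\end{equation*}
Taking the infimum over all such representatives $z$ of $[M]$ gives $\|M\|_1 \ge |\deg(\varphi)| \cdot \|N\|_1$.

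For the covering case, $|\deg(\varphi)| = d$ (Example \ref{exdomin}(3)), so the previous step gives $\|M\|_1 \ge d \|N\|_1$. It remains to prove the reverse inequality $\|M\|_1 \le d\|N\|_1$. Given any cycle $w = \sum_k c_k \tau_k$ representing $[N] \in H_n(N;\R)$, each singular simplex $\tau_k : \Delta^n \to N$ admits exactly $d$ lifts $\widetilde\tau_k^{(1)}, \ldots, \widetilde\tau_k^{(d)} : \Delta^n \to M$ through the covering $\varphi$, because $\Delta^n$ is simply connected (one lift per element of $\varphi^{-1}(\tau_k(e_0))$ for a fixed vertex $e_0 \in \Delta^n$). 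Set
\begin{equation*}
\widetilde w \;=\; \sum_k c_k \sum_{j=1}^d \widetilde\tau_k^{(j)} .
\end{equation*}

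The key step to verify is that $\widetilde w$ is a cycle and represents $\pm[M]$. Face maps commute with lifting in the following sense: for each face $\partial_i \tau_k$, its $d$ lifts are exactly the faces $\partial_i \widetilde\tau_k^{(j)}$ (same collection, merely reindexed within themselves), so $\partial \widetilde w$ is the ``total lift'' of $\partial w = 0$ and therefore vanishes. Moreover $\varphi_\#(\widetilde w) = d \cdot w$, hence $\varphi_n([\widetilde w]) = d\,[N] = \pm \varphi_n([M])$ in $H_n(N;\R)$. Because $\varphi_n : H_n(M;\R) \to H_n(N;\R)$ is an isomorphism of one-dimensional spaces (both equal to $\R$, and $\varphi_n$ is multiplication by $\pm d \ne 0$), we conclude $[\widetilde w] = \pm [M]$. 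Therefore $\|M\|_1 \le \|\widetilde w\|_1 \le d \sum_k |c_k|$; taking the infimum over $w$ yields $\|M\|_1 \le d\|N\|_1$, which combined with the earlier inequality gives the claimed equality.

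The main obstacle is the bookkeeping around the lifted chain: one must confirm that the multiset of faces $\{\partial_i \widetilde\tau_k^{(j)}\}_{j=1}^d$ coincides with the multiset of lifts of $\partial_i \tau_k$, so that boundaries of the symmetrized sum assemble correctly and $\widetilde w$ lands in the kernel of $\partial$. Once this combinatorial point is established, identifying its class via $\varphi_n$ is immediate from $H_n(M;\R) \simeq \R$ and the nonvanishing of $\deg(\varphi)$.
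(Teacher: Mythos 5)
Your proof is correct and follows essentially the same route as the paper: the inequality comes from the fact that push-forward of chains does not increase the $\ell^1$-norm, and the equality for coverings comes from lifting a near-optimal cycle for $[N]$ to all $d$ sheets (the transfer chain). Your identification of the class of the lifted cycle with $\pm[M]$ via the injectivity of $\varphi_n$ on the one-dimensional $H_n(M;\R)$ just makes explicit a step the paper asserts without comment.
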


\begin{proof}
The first claim is a particular case of the following straightforward general fact:
if $f : X \longrightarrow Y$ is a continuous map between topological spaces
and $j$ a non-negative integer, then $\Vert f_j (c) \Vert_1 \le \Vert c \Vert_1$
for all $c \in H_j(X; \R)$.
\par

Set $n = \dim (N)$.
Let $\varepsilon > 0$. 
There exists a $n$-cycle $v = \sum_{\ell} v_\ell \tau_\ell$ representing $[N]$
such that $\Vert v \Vert_1 = \sum_\ell \vert v_\ell \vert \le \Vert N \Vert_1 + \varepsilon$.
Assume that $\varphi$ is a $d$-sheeted covering.
Since every $n$-simplex $\tau : \Delta^n \longrightarrow N$ has $d$ lifts
$\sigma : \Delta^n \longrightarrow M$, 
there exists a natural $n$-cycle $u = \sum_k u_k \sigma_k \in Z_n(M; \R)$
such that $\varphi_n (u) = dv$, and the class of $u$ is $[M]$.
We have therefore
$\Vert M \Vert_1 \le \Vert u \Vert_1 =  \Vert d v \Vert_1 \le 
\vert d \vert \Vert N \Vert_1 + \vert d \vert \varepsilon$,
and the second claim follows.
\end{proof}

Together with \ref{exessimplicialvol}(2), Proposition \ref{propsimplicialvolmap}  implies:

\begin{cor}
% 3.4
\label{0doesnotdominhyp}
If $\Vert M \Vert_1 = 0$ and $N$ is hyperbolic, then $M$ does not dominate $N$.
\end{cor}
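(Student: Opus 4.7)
The plan is to argue by contradiction, using the two ingredients advertised just before the statement: the degree inequality of Proposition \ref{propsimplicialvolmap} and the (referenced) fact from \ref{exessimplicialvol}(2) that a hyperbolic oriented connected closed manifold has strictly positive simplicial volume (this is the Gromov--Thurston computation $\Vert N \Vert_1 = \mathrm{Vol}(N)/v_n$, where $v_n$ is the supremum of volumes of geodesic $n$-simplices in $H^n$; the key point for us is only positivity).

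Concretely, I would assume for contradiction that $M$ dominates $N$, so that there is a continuous map $\varphi : M \longrightarrow N$ with $\deg(\varphi) \ne 0$. Applying Proposition \ref{propsimplicialvolmap} directly, one gets
\[
\Vert M \Vert_1 \, \ge \, \vert \deg(\varphi) \vert \, \Vert N \Vert_1 .
\]
Since $N$ is hyperbolic, \ref{exessimplicialvol}(2) gives $\Vert N \Vert_1 > 0$, and since $\vert \deg(\varphi) \vert \ge 1$, the right-hand side is strictly positive. This contradicts the hypothesis $\Vert M \Vert_1 = 0$, and so no such $\varphi$ can exist.

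There is essentially no obstacle here: the content of the corollary is packaged entirely into the two cited results, and the deduction is a one-line consequence of the inequality applied to any would-be dominating map. The only conceptual step one might want to underline in the write-up is that nothing about the map $\varphi$ (smoothness, injectivity of induced homology maps, finiteness properties of $\pi_1$, etc.) is needed beyond having non-zero degree, which is precisely the strength of the simplicial-volume obstruction compared with the obstructions of Proposition \ref{propdegmanif}.
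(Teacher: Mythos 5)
Your argument is correct and is exactly the paper's (implicit) proof: the corollary is stated as an immediate consequence of Proposition \ref{propsimplicialvolmap} together with the positivity of the simplicial volume of hyperbolic manifolds from Example \ref{exessimplicialvol}(2). Nothing further is needed.
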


\begin{exes}
% 3.5
\label{exessimplicialvol}
Let $M, N$ be oriented connected closed manifolds,
and $\Gamma$ the fundamental group of $M$.

\vskip.2cm

(1)
If there exists a continuous map $M \longrightarrow M$ of degree $d$
with $\vert d \vert \ge 2$, then $\Vert M \Vert_1 = 0$.
For example, $\Vert S^n \Vert_1 = 0$ for all $n \ge 1$, 
see Example \ref{exdeg}(1).
\par

For $k \ge 2$, the complex projective space $P^n_\C$ has
a continuous endomorphism of degree $k^n > 1$ given in homogeneous coordinates by
$$
(z_0 : z_1 : \cdots : z_n) \, \longmapsto \,  (z_0^k : z_1^k : \cdots : z_n^k) ;
$$
hence $\Vert P^n_\C \Vert_1  = 0$ for all $n \ge 1$.
Similarly, $\Vert P^n_\R \Vert_1 = 0$ for all odd $n \ge 3$
(recall that $P^n_\R$ is orientable for $n$ odd).
\par

If there exists a nontrivial circle action on $M$, then $\Vert M \Vert_1 = 0$
\cite[Page 41]{Grom--82}.
In particular, in dimension $3$,
if $M$ is a Seifert manifold, then $\Vert M \Vert_1 = 0$.

\vskip.2cm

(2)
If $M$ is hyperbolic, then $\Vert M \Vert_1 > 0$.
More precisely, if $M = H^n / \Gamma$,
where $H^n$ is the hyperbolic space of dimension $n$
(with $n \ge 2$),
and $\Gamma$ an appropriate cocompact discrete subgroup
of the group of orientation-preserving isometries of $H^n$, then
$$
\Vert M \Vert_1 \, = \, \operatorname{Vol} (M) / \nu_n ,
$$
where $\operatorname{Vol}$ indicates the Riemannian volume
and $\nu_n$ the maximal volume of an ideal $n$-simplex in $H^n$.
This is known as the Gromov-Thurston theorem; see e.g.\  \cite[Theorem 6.2]{Thur--80}.
\par

For surfaces, let $\Sigma_g$ denote an orientable connected closed surface of genus $g$,
and thus of Euler characteristic $\chi (\Sigma_g) = 2-2g$.
Then 
$$
\Vert \Sigma_g \Vert_1 \, = \,  2 \vert \chi (\Sigma_g) \vert \, = \, 2g-2 \, > \, 0
\hskip.5cm \text{when} \hskip.2cm g \ge 2
$$ 
and $\Vert \Sigma_g \Vert_1 = 0$ when $g = 0,1$.

\vskip.2cm

(3)
Consider a continuous map $f: \Sigma_g \longrightarrow \Sigma_h$,
and let $d = \deg (f)$ denote its degree.
We have already observed that, if $h = 0$, then $g$ and $d$ can be arbitrary
(Example \ref{exdeg}(1)).
\par

Assume from now on that $d \ne 0$.
By Example \ref{exapresUmkehr}(3), we know that $g \ge h$.
If $h=1$, then $g \ge 1$ and, similarly, $d$ can be arbitrary.
\par

Assume from now on that, moreover,  $h \ge 2$.
We have
$
2(g-1) \, = \, \vert \chi(\Sigma_g) \vert \,   \ge \,  \vert d \vert \hskip.1cm \vert \chi(\Sigma_h) \vert
\, = \,  \vert d \vert 2(h-1)
$ 
by Proposition \ref{propsimplicialvolmap}, i.e.\
$$
\vert d \vert \, \le \, \frac{g-1}{h-1}
$$
(a result already in Kneser \cite{Knes--30}).
This improves a conclusion in Example \ref{exapresUmkehr}(3).
We leave it to the reader to check that, 
for $g \ge h \ge 2$, 
every $d$ with $\vert d \vert  \le  \frac{g-1}{h-1}$
is the degree of some continuous map $\Sigma_g \longrightarrow \Sigma_h$.

\vskip.2cm

(4)
If $M$ is a compact Riemannian locally symmetric spaces of the non-compact type,
then $\Vert M \Vert_1 > 0$.
This has been conjectured in \cite[Page 11]{Grom--82},
and proved by the conjunction of  \cite{LaSc--06} and \cite{Buch--07}.
\par

More generally, Gromov asked if (or conjectured that ?)  this holds for
every manifold with non-positive curvature and negative Ricci curvature.

\vskip.2cm

(5)
For $M, N$ closed connected manifolds of dimension $m,n$ respectively:
\begin{equation}
\tag{$\Pi$}
\label{eqo3.5(5)}
\Vert M \Vert_1 \Vert N \Vert_1 \le \Vert M \times N \Vert_1 
\le \binom{m+n}{n} \Vert M \Vert_1 \Vert N \Vert_1 .
\end{equation}
\par

\emph{Note.}
Let $\Delta^m$ be a $m$-simplex in $\R^m$, with vertices $a_0, a_1, \hdots, a_m$,
and $\Delta^n$ a $n$-simplex in $\R^n$, with vertices $b_0, b_1, \hdots, b_n$.
In $\R^{m+n}$, there are $\binom{m+n}{m}$ sequences of the form
\footnotesize
$$
(a_0, b_0) = (a_{j_0}, b_{k_0}), (a_{j_1}, b_{k_1}), \hdots, (a_{j_\ell}, b_{k_\ell}) \hdots,
(a_{j_{m+n}}, b_{k_{m+n}}) = (a_m, b_n) ,
$$
\normalsize
where $(a_{j_\ell}, b_{k_\ell})$ is followed either by $(a_{j_\ell + 1}, b_{k_\ell})$
or by $(a_{j_\ell}, b_{k_\ell + 1})$, for $\ell \in \{0, 1, \hdots, m+n-1 \}$.
The  $(m+n)$-simplices of this form, $\binom{m+n}{m}$ of them,
constitute a triangulation of the product $\Delta^m \times \Delta^n$.
This is where the constant $\binom{m+n}{m}$ in (\ref{eqo3.5(5)}) comes from.

\vskip.2cm

(6)
Let $M$ be a closed connected manifold such that $\Vert M \Vert_1 > 0$.
In most of known examples \emph{which are not} hyperbolic manifolds,
the exact value of $\Vert M \Vert_1$ is not known.
\par

There is one remarkable exception, 
for manifolds covered by $H^2 \times H^2$.
For example, $\Vert \Sigma_g \times \Sigma_h \Vert_1 = 
\frac{3}{2} \Vert \Sigma_g \Vert_1  \Vert \Sigma_h \Vert_1$
\cite{Buch--08}.

\vskip.2cm

(7)
If $n \ge 3$ and $M = M_1 \sharp M_2$ is a connected sum
of oriented connected closed $n$-manifolds,
then $\Vert M \Vert_1 = \Vert M_1 \Vert_1 + \Vert M_2 \Vert_1$
\cite[Section 3.5]{Grom--82}.
Note that this is not true for surfaces ($n=2$) !

\vskip.2cm

(8)
Let $M$ be an oriented connected closed $3$-manifold; assume that $M$ is irreducible.
Let $H_1, \hdots, H_k, S_{k+1}, \hdots, S_\ell$ be the pieces of a JSJ-decomposition of $M$,
with $H_1, \hdots, H_k$ hyperbolic and $S_{k+1}, \hdots, S_\ell$ non-hyperbolic.
For $j = 1, \hdots, k$, denote by $\operatorname{Vol}(H_j)$ the volume of
the hyperbolic structure on the interior of $H_j$.
Then 
$$
\Vert M \Vert_1 \,  = \,  \frac{1}{\nu_3} \sum_{j=1}^k \operatorname{Vol}(H_j) ,
$$
with $\nu_3$ is the maximal volume of ideal $3$-simplices, as in (2) above.
See \cite{Soma--81}, 
and also \cite[Section 4.2, Corollary on Page 58]{Grom--82}.

\vskip.2cm

(9) 
If $\pi_1(M)$ is amenable, then $\Vert M \Vert_1 = 0$ .
In particular, if $M$ is simply connected or with finite fundamental group, 
then $\Vert M \Vert_1 = 0$.
See \cite[Section 3.1]{Grom--82}.

\vskip.2cm

(10)
Let $M$ be the total space of a fibre bundle
of which both the base space and the fibre are
oriented connected closed manifolds of positive dimensions.
If the fundamental group of the fibre is amenable, then $\Vert M \Vert_1 = 0$.
See \cite[Section 5.2]{Loh--11}.

\vskip.2cm

(11)
If a manifold $M$ of dimension at least $2$ is rationally essential 
(Definition \ref{defratess} below)
and if its fundamental group is Gromov-hyperbolic,
then $\Vert M \Vert_1 > 0$. See \cite[Corollary 5.3]{Loh--11}.

\vskip.2cm

(12)
If $n=4$, and $M$ fibers over an oriented base $B$ 
with fibers oriented surfaces $\Sigma_g$ of genus $g \ge 2$, then
$\Vert M \Vert_1 \ge \Vert \Sigma_g \Vert_1 \hskip.1cm \Vert B \Vert_1$
\cite{HoKo--01}.

\vskip.2cm

(13)
Consider the oriented  connected closed $3$-manifolds 
$M = S^1 \times \Sigma_g$ and $N = H^3 / \Gamma$,
where $\Sigma_g$ denotes an oriented connected closed surface of genus $g$ at least two,
$H^3$ the hyperbolic space of dimension $3$,
and $\Gamma$ an appropriate cocompact discrete subgroup
of the group of isometries of $H^3$.
\par

Since $\Vert M \Vert_1 = 0$ and $\Vert N \Vert_1 > 0$, 
the manifold $M$ does not dominate $N$, 
by Proposition  \ref{propsimplicialvolmap} or Example \ref{exdomin}(8).
\par

Observe however that, for $g$ at least as large as 
the minimal number of generators of $\Gamma$,
the obstructions of Proposition \ref{propdegmanif} do not apply.
\par

Indeed, there is a surjection of $\pi_1(\Sigma_g)$ onto the free group $F_g$ of rank $g$,
a fortiori a surjection of $\pi_1(M) = \Z \times \pi_1(\Sigma_g)$ onto $F_g$,
and a surjection of $F_g$ onto $\pi_1(N) = \Gamma$.
The composition $\pi_1(M) \longrightarrow \pi_1(N)$ is an epimorphism;
compare with Proposition \ref{propdegmanif}\ref{1DEpropdegmanif}. 
Also 
$$
H_1(M; \Q) \simeq \left(\pi_1(M) / [\pi_1(M), \pi_1(M)] \right) \otimes_\Z \Q
$$
surjects onto 
$$
H_1(N; \Q) \simeq \left(\pi_1(N) / [\pi_1(N), \pi_1(N)] \right) \otimes_\Z \Q
$$
and it follows that the Betti numbers satisfy
$\dim (H_j(M; \Q)) \ge \dim (H_j(N; \Q))$ for all $j$;
compare with Proposition \ref{propdegmanif}\ref{2DEpropdegmanif}.
\end{exes}

\begin{exe}
% 3.6
Here is an example of non-domination which can be proved by other methods,
giving rise to other kinds of obstructions.
\par
Let $M$ be a compact K\"ahler manifold,
and $N$ a compact locally symmetric space of the noncompact type
which is not locally hermitian symmetric, of the same dimension as $M$.
Then $M$ does not dominate $N$.
This is a particular case of results in \cite{CaTo--89},
obtained using Eells and Sampson's theory of harmonic mappings.

For example, if $\Gamma$ is a torsion-free cocompact lattice in $\operatorname{SL}_n(\R)$,
with $n > 2$, a compact K\"ahler manifold does not dominate
$\Gamma \backslash \operatorname{SL}_n(\R) / \operatorname{SO}(n)$.
\end{exe}

\begin{rem}
% 3.7
There is a ``mapping theorem'' in bounded cohomology \cite[Section 3.1]{Grom--82}:
Let $X, Y$ be connected topological spaces which are good enough 
(say CW-complexes, for simplicity)
and $f : X \longrightarrow Y$ a continuous map which induces an isomorphism
$f_\pi = \pi_1(X) \longrightarrow \pi_1(Y)$. Then $f^* : H^*_b(Y) \longrightarrow H^*_b(X)$
is an isometric isomorphism (with respect to the norms $\Vert \cdot \Vert_\infty$).
\par

It follows that, if $M$ is an oriented connected closed manifold
and $c : M \longrightarrow B(\pi_1(M))$ its classifying map, 
then $\Vert M \Vert_1 = \Vert f_n ([M]) \Vert_1$.
\par

Note however that two manifolds with isomorphic fundamental groups
\emph{need not} have the same simplicial volume.
For example, if $M$ is such that $\Vert M \Vert_1 > 0$,
then $\Vert M \times S^2 \Vert_1 = 0$
by Example \ref{exessimplicialvol}(5);
of course $\pi_1(M \times S^2) \simeq \pi_1(M)$.
\end{rem}

\section{Domination by products for manifolds and for groups}
% section4
\label{sectionproduct}

Here is a particular case of Question \ref{questionGrom}.

\begin{que}
% 4.1
\label{questionGromProd}
Let $N$ be an oriented connected closed manifold of dimension $n$. 
Can $N$ be dominated by a product ? 
\par
In other words, do there exist two
oriented connected closed manifolds $M_1, M_2$, 
say of dimensions $n_1, n_2 \ge 1$ with  $n_1 + n_2 = n$, 
such that $M_1 \times M_2$ dominates $N$ ?
\end{que}

Proposition \ref{propKL09} below shows that, in some cases,
Question \ref{questionGromProd} can be translated to fundamental groups.

\begin{rem}
% 4.2
\label{SteenrodThom}
Question \ref{questionGromProd} refers more or less implicitely to other very classical questions.
\par
Steenrod's problem on realization of cycles asks:
\emph{for a finite polyhedron $K$ and a homology class $z \in H_j(K; \Z)$,
does there exist an oriented connected closed manifold $M$ of dimension $j$ 
and a map $f : M \longrightarrow K$
such that $z$ is the image by $f_j$ of the fundamental class of $M$~?} 
see \cite[Problem 25]{Eile--49}.
The answer is positive when $j \le 5$;
it need not be positive in the general case 
\cite[th\'eor\`emes III.3 and III.9]{Thom--54}, 
but that there exists
always a multiple $kz$ (where $k \in \Z$ depends on $j$ only) 
which is of the form $f_j([M])$  \cite[th\'eor\`eme III.4]{Thom--54}.
\par

There is in \cite[Page 304]{Grom--99} a discussion of classes in $H_j(\cdot; \Z)$, with $j$ even,
which can be represented by products of surfaces.
\end{rem}

\begin{reminder}[Classifying spaces of coverings]
% 4.3
Let us review some material on classifying spaces for coverings.
To my surprise, the only references I found on this
are in the setting of topological groups and principal bundles
(see e.g. \cite{Dold--63}),
rather than in the simpler setting of groups and coverings.
Say here that a topological space is ``good enough'' if it is
path connected, locally path connected, and locally simply connected
(so that fundamental group and coverings are simply dealt with).

\vskip.2cm

Let $\Gamma$ be a group.
Recall that there exists a \textbf{classifying covering} 
$\pi_\Gamma : E(\Gamma) \longrightarrow B(\Gamma)$
where $E(\Gamma), B(\Gamma)$ are topological spaces
and $B(\Gamma)$ is a pointed space, with the following property.
For any space $X$ which is good enough and paracompact
and for any covering $p : E \longrightarrow X$ of group $\Gamma$,
there exists a continuous map (well-defined up to homotopy equivalence) 
$c : X \longrightarrow B(\Gamma)$ such that the covering $p$ is isomorphic to 
the pullback by $c$ of the classifying covering $\pi_\Gamma$.
The base space $B(\Gamma)$ is the \textbf{classifying space for the group $\Gamma$}.
\par

It is a basic result that a covering of group $\Gamma$ over a good enough space
is classifying if and only if its total space is contractible.
Consequently, another name for ``classifying space for $\Gamma$'' 
is ``Eilenberg-MacLane space $K(\Gamma,1)$''.
\par

Let $\Delta$ be another group 
and $\varphi : \Gamma \longrightarrow \Delta$ a homomorphism.
Given a covering $p : E \longrightarrow X$ of group $\Gamma$,
there is an induced covering $(\varphi)_*p : F \longrightarrow X$ of group $\Delta$, 
in which the total space $F$ is the quotient $E \times_\Gamma \Delta$ of $E \times \Delta$
by the equivalence relation generated by 
$(\xi,\delta) \sim (\xi \gamma, \varphi(\gamma)\delta)$,
for all $\xi \in E, \delta \in \Delta, \gamma \in \Gamma$.
\par

The assignment to a group of its classifying space is functorial in the following sense:
to every homomorphism $\varphi : \Gamma \longrightarrow \Delta$,
corresponds a continuous map $B(\varphi) : B(\Gamma) \longrightarrow B(\Delta)$
mapping base point to base point, 
such that, if $p : E \longrightarrow X$ is a covering of group $\Gamma$,
pulled back as above from $\pi_\Gamma$ 
by some continuous map $c : X \longrightarrow B(\Gamma)$,
then $(\varphi)_*p$ is isomorphic to the covering of group $\Delta$
pulled back from $\pi_\Delta$ by the composition $B(\varphi) \circ c$.
\par

One way to prove these claims about the functor $B(\cdot)$
is to invoke Milnor's join construction of $B(\Gamma)$ \cite{Miln--56}.

For example, let $\varphi$ be the reduction modulo $2$ 
from $\Gamma = \Z$ to $\Delta = \Z / 2\Z$.
The standard model for the classifying space of $\Z$ is the circle $\R / \Z$
(the total space of the classifying covering is $\R$),
and that for $\Z / 2\Z$ is the infinite real projective space $P^\infty_\R = \lim_n P^n_\R$
(the corresponding total space is an inductive limit $\lim_n S^n$ of spheres).
Then $B(\varphi) : \R / \Z \longrightarrow P^\infty_\R$ maps the circle
to a loop which represents the non-trivial element of the fundamental group of $P^\infty_\R$.

Let $\Gamma = \Gamma_1 \times \Gamma_2$ be a group 
given as a direct product of two subgroups.
The two projections $\varphi_j : \Gamma \longrightarrow \Gamma_j$
provide a map $B(\Gamma) \longrightarrow B(\Gamma_1) \times B(\Gamma_2)$
\emph{which is an isomorphism}.
Indeed, since homotopy groups of a product of spaces 
are products of homotopy groups,
we have $\pi_1 \left( B(\Gamma_1) \times B(\Gamma_2) \right) = \Gamma_1 \times \Gamma_2 = \Gamma$
and $\pi_j (\text{idem}) = \{0\}$ for $j \ge 2$;
hence $B(\Gamma_1) \times B(\Gamma_2)$ is a classifying space for $\Gamma$, as claimed.
\par

Consider in particular two connected closed manifold $M, N$ and a continuous map
$f : M \longrightarrow N$.
Let $\widetilde M$ denote the universal covering of $M$, which is a covering of group $\pi_1(M)$,
pullback of the classifying covering for this group 
by some continuous map $c_M : M \longrightarrow B(\pi_1(M))$.
Similarly, the universal covering $\widetilde N$ 
is pulled back of the classifying covering for $\pi_1(N)$
by some continuous map $c_N : N \longrightarrow B(\pi_1(N))$.
\par
\begin{center}
\emph{Then the maps $c_N \circ f$ and $B(f_\pi) \circ c_M$ are homotopic.} 
\end{center}
\end{reminder}

The following definition is a variation on a notion introduced in 
\cite[Page 95]{Grom--82} and \cite[$\S$~0]{Grom--83}.
For a group $\Gamma$, we write $H_*(\Gamma; \Q)$ for $H_*(B(\Gamma); \Q)$.

\begin{defn}
% 4.4
\label{defratess}
An orientable connected closed manifold $M$ of positive dimension $n$ is
\textbf{rationally essential} if, with the notation above,
the image of a fundamental class $[M]$ 
by the map $(c_M)_n : H_n(M; \Q) \longrightarrow H_n(\pi_1(M); \Q)$
 is not zero.
\end{defn}

\begin{exes}
% 4.5
Let $M$ be an orientable connected closed manifold.

\vskip.2cm

(1)
A connected manifold is \textbf{aspherical} if its universal covering is contractible;
more on these manifolds in \cite{Luck--12}.
If $M$ is aspherical, 
then $M$ is rationally essential, and indeed $M$ itself is a good model for $B(\pi_1(M))$.
There are many aspherical examples of orientable connected closed manifolds.
The most standard examples include:

   (1.i) all surfaces, except the sphere;

   (1.ii) all $3$-manifolds except those of the three following classes
(as a consequence of the sphere theorem of Papakyriakopoulos):
manifolds finitely covered by the $3$-sphere, 
non-trivial connected sums (they have non-trivial $\pi_2$), 
and $S^1 \times S^2$;

   (1.iii) all manifolds admitting a Riemannian metric with nonpositive sectional curvature
(a consequence of the Cartan-Hadamard theorem);
   
   (1.iv) all manifolds of the form $\Gamma \backslash G / K$,
with $G$ a connected Lie group, 
$K$ a maximal compact subgroup of $G$,
and $\Gamma$ a torsion-free cocompact lattice in $G$
(a consequence of the Cartan-Malcev-Iwasawa theorem);
when $G$ is linear semi-simple without compact factors, 
the corresponding double coset space is an example of (1.iii).

\vskip.2cm

(2) 
If the simplicial volume $\Vert M \Vert_1$ is positive, 
then $M$ is rationally essential  \cite[Section 3.1]{Grom--82}.

\vskip.2cm

(3)
If there exists a rationally essential manifold $N$ and a continuous map
$M \longrightarrow N$ of non-zero degree, then $M$ is rationally essential.
For example, every connected sum $M_1 \sharp M_2$ with $M_2$ rationally essential
is rationally essential.

\vskip.2cm

(4)
If $\pi_1(M)$ is finite, then $M$ is not rationally essential,
since $H_j(\pi_1(M); \Q) = \{0\}$ for all $j \ge 1$.
\end{exes}

\begin{prop}[Kotschick-L\"oh \cite{KoLo--09}]
% 4.6
\label{propKL09}
Let $M, N$ be two oriented connected closed manifolds 
and $f : M \longrightarrow N$ a continuous map; set $\Gamma = \pi_1(M)$.
Assume that $M = M_1 \times M_2$ is a non-trivial product.
For $j=1,2$, 
\begin{enumerate}
\item[$\cdot$]
denote by $n_j \ge 1$ the dimension of $M_j$; 
\item[$\cdot$]
choose a base point $x_j \in M_j$; 
\item[$\cdot$]
set $\Gamma_j = \pi_1(M_j, x_j)$, so that $\Gamma = \Gamma_1 \times \Gamma_2$;  
\item[$\cdot$]
denote by $f_j : M_j \longrightarrow N$ ($j=1,2$) the restriction of $f$
to $M_1 \times \{x_2\}$, $\{x_1\} \times M_2$ respectively; 
\item[$\cdot$]
denote by $\Delta_j$ the image $(f_j)_\pi(\Gamma_j)$.
\end{enumerate}
Observe that $\Delta_1$ and $\Delta_2$ are commuting subgroups of $\pi_1(N)$.
\begin{enumerate}
\item[$\cdot$]
Denote by $\psi$ the multiplication homomorphism 
$$
\Delta_1 \times \Delta_2 \,  \longrightarrow \,  \pi_1(N), 
\hskip.2cm (\delta_1, \delta_2) \longmapsto \delta_1\delta_2 ,
$$
so that the image of $\psi$ coincides with the image of $f_\pi$.
\end{enumerate}
Suppose moreover that $N$ is rationally essential.
\vskip.1cm

If the degree of $f$ is not zero, then both $\Delta_1$ and $\Delta_2$ are \emph{infinite}
subgroups of $\pi_1(N)$.
\end{prop}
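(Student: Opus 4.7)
The plan is to argue by contradiction: suppose $\deg(f) \neq 0$ and, without loss of generality, that $\Delta_1$ is finite. I will then show that the image of $[M]$ in the rational homology of $B(\pi_1(N))$ under $c_N \circ f$ must vanish, contradicting rational essentialness of $N$ combined with $\deg(f) \neq 0$.

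First, I would invoke the naturality statement recalled at the end of the classifying-space reminder: the composition $c_N \circ f : M \longrightarrow B(\pi_1(N))$ is homotopic to $B(f_\pi) \circ c_M$. Pushing the fundamental class $[M] \in H_n(M; \Q)$ through $c_N \circ f$ gives $\deg(f) \cdot (c_N)_n([N])$, which is non-zero by the hypothesis $\deg(f) \neq 0$ and the assumption that $N$ is rationally essential. So it suffices to show that $B(f_\pi)_n \circ (c_M)_n ([M]) = 0$ in $H_n(\pi_1(N); \Q)$.

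Next, I would exploit the product structure. Since $\pi_1(M) = \Gamma_1 \times \Gamma_2$ and $B(\Gamma_1 \times \Gamma_2) \simeq B(\Gamma_1) \times B(\Gamma_2)$, the classifying map of $M$ may be chosen to be the product $c_{M_1} \times c_{M_2}$. Under the Künneth isomorphism
\begin{equation*}
H_n\bigl(B(\Gamma_1) \times B(\Gamma_2); \Q\bigr) \;\cong\; \bigoplus_{i+j=n} H_i(B(\Gamma_1); \Q) \otimes H_j(B(\Gamma_2); \Q) ,
\end{equation*}
and using $[M] = [M_1] \times [M_2]$, one has $(c_M)_n([M]) = (c_{M_1})_{n_1}([M_1]) \otimes (c_{M_2})_{n_2}([M_2])$, which lives in the single summand with $i=n_1$, $j=n_2$.

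Now I factor $f_\pi : \Gamma_1 \times \Gamma_2 \to \pi_1(N)$ through $(f_1)_\pi \times (f_2)_\pi : \Gamma_1 \times \Gamma_2 \twoheadrightarrow \Delta_1 \times \Delta_2$ followed by the multiplication homomorphism $\psi : \Delta_1 \times \Delta_2 \to \pi_1(N)$ (which is a homomorphism precisely because $\Delta_1$ and $\Delta_2$ commute). Applying $B(\cdot)$ and then Künneth, the image of $(c_M)_n([M])$ in $H_n(B(\Delta_1) \times B(\Delta_2); \Q)$ is
\begin{equation*}
((f_1)_\pi)_{n_1}\bigl((c_{M_1})_{n_1}([M_1])\bigr) \;\otimes\; ((f_2)_\pi)_{n_2}\bigl((c_{M_2})_{n_2}([M_2])\bigr) \;\in\; H_{n_1}(B(\Delta_1); \Q) \otimes H_{n_2}(B(\Delta_2); \Q) .
\end{equation*}
Here is the crux: because $\Delta_1$ is assumed finite and $n_1 \ge 1$, a well-known consequence of the transfer (or the vanishing of group homology of finite groups with rational coefficients in positive degrees) gives $H_{n_1}(B(\Delta_1); \Q) = 0$. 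Hence the first tensor factor vanishes, the whole class vanishes in $H_n(B(\Delta_1) \times B(\Delta_2); \Q)$, and a fortiori its image under $B(\psi)_n$ vanishes in $H_n(\pi_1(N); \Q)$. This is the contradiction sought; by symmetry the same argument applies to $\Delta_2$.

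The main conceptual obstacle is setting up the commutative square relating $c_N \circ f$ and $B(f_\pi) \circ c_M$ and being careful with Künneth — the actual "killing" argument (finite groups have trivial rational homology in positive degrees) is immediate once everything is in place. Nothing else is required beyond the reminder on classifying spaces, the definition of rational essentialness, and naturality of Künneth.
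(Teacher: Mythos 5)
Your proposal is correct and follows essentially the same route as the paper: both push $[M]$ through the homotopy-commutative diagram built from $c_N \circ f \simeq B(f_\pi)\circ c_M$, factor $f_\pi$ through $B(\Delta_1)\times B(\Delta_2)$ via Künneth, and use that finite groups have trivial rational homology in positive degrees; the paper phrases this directly (the common image is non-zero, so $h_1\otimes h_2\neq 0$, so both $H_{n_j}(\Delta_j;\Q)\neq\{0\}$), while you phrase it as a contradiction, which is only a cosmetic difference.
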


\begin{proof}
Recall that $f_\pi$ denotes the homomorphism $\pi_1(M) \longrightarrow \pi_1(N)$ induced by $f$.
We have a diagram
\begin{equation*}
\dia{
M_1 \times M_2 = M \hskip.5cm
\ar@{->}[dd]^{f}
\ar@{->}[r]^{c_{M_1} \times c_{M_2} = c_M}
& \hskip.5cm B(\Gamma_1) \times B(\Gamma_2) \hskip.5cm
\ar@{->}[d]^{B\left((f_1)_\pi\right) \times B\left((f_2)_\pi\right)}
\ar@{->}[r]^{\hskip1cm =}  
& \hskip.5cm B(\Gamma)
\ar@{->}[dd]^{B\left( f_\pi \right)}
\\
% row 2
& \hskip.5cm B(\Delta_1) \times B(\Delta_2)
\ar@{->}[d]^{B(\psi)}
&
\\
% row 3
N
\ar@{->}[r]^{c_N}
& B(\pi_1(N))
\ar@{->}[r]^=
&  \hskip.2cm B(\pi_1(N))
}
\end{equation*}
commuting up to homotopy.
(We have abusively denoted by ``$=$'' several canonical isomorphisms.)
\par
There is a corresponding commutative diagram for the homologies $H_n(\cdot; \Q)$,
in which there are three ways to go from the top left corner to the bottom right corner.
\par

(i) Going through the left-hand vertical arrow, 
the fundamental class $[M]$ in $H_n(M; \Q)$ 
is mapped to $\deg (f) (c_N)_n([M])$ in $H_n(\pi_1(N); \Q)$.
\par

(ii) Going through the central vertical arrow, 
$[M]$ goes through a class 
$h_1 \otimes h_2 \in H_{n_1}(\Delta_1; \Q) \otimes H_{n_2}(\Delta_2; \Q)
\subset H_n(\Delta_1 \times \Delta_2; \Q)$.
\par

(iii)
Going  through the right-hand vertical arrow, $[M]$ goes to
$$
\left( B(f_\pi) \right)_n \circ \left( c_M \right)_n ([M]) 
\, \in \, \left( B(f_\pi) \right)_n \left( H_n(\Gamma; \Q) \right)
\, \subset \, H_n(\pi_1(N); \Q) .
$$
These three images of $[M]$ in $H_n(\pi_1(N); \Q)$ coincide.
\par

If $\deg (f) \ne 0$ and if $N$ is rationally essential,
this common image is not zero.
Hence $h_1 \otimes h_2 \ne 0$;
in particular 
$H_{n_1}(\Delta_1; \Q) \ne \{0\}$ and $H_{n_2}(\Delta_2; \Q) \ne \{0\}$.
It follows that $\Delta_1$ and $\Delta_2$ are infinite groups.
\end{proof}

This suggests the following definition, again from \cite{KoLo--09}:

\begin{defn}
% 4.7
\label{defpbp}
A group $\Delta$ is \textbf{presentable by a product},
or shortly below \textbf{pbp}, 
if there exist two groups $\Gamma_1, \Gamma_2$ and a homomorphism
$\varphi : \Gamma_1 \times \Gamma_2 \longrightarrow \Delta$
with the two following properties:

(i) 
both $\varphi(\Gamma_1)$ and $\varphi(\Gamma_2)$ are infinite subgroups of $\Delta$,

(ii) 
these subgroups generate in $\Delta$ a subgroup of finite index.
\end{defn}

\begin{exes}
% 4.8
(1)
A group $\Gamma$ with infinite centre is pbp, 
because the multiplication
$\Gamma \times Z(\Gamma) \longrightarrow \Gamma$
has Properties (i) and (ii) above.

\vskip.2cm

(2)
Let $\Delta$ be a group and $\Delta'$ a subgroup of finite index.
Then $\Delta$ is pbp if and only if $\Delta'$ is pbp.

\vskip.2cm

(3)
Let $\Delta = \Delta_1 \ast \Delta_2$ be a free product of two non-trivial groups.
Then $\Delta$ is pbp if and only if both $\Delta_1$ and $\Delta_2$ are of order $2$
(and therefore $\Delta$ is an infinite dihedral group).
See \cite[Corollary 9.2]{KoLo--13}.

\vskip.2cm

(4)
An infinite simple group is not pbp.

\vskip.2cm

(5)
A non-elementary Gromov hyperbolic group is not pbp
\cite[Theorem 1.5]{KoLo--09}.

\vskip.2cm

(6)
For a closed $3$-manifold $M$ with infinite fundamental group, the following properties
are equivalent \cite[Theorem 8]{KoNe--13}:

(i)
$\pi_1(M)$ is pbp,

(ii)
$\pi_1(M)$ has a finite index subgroup with infinite centre,

(iii)
$M$ is a Seifert manifold.

\vskip.2cm

\noindent
For other examples, see 
\cite{KoLo--09, KoLo--13, HaKo},
as well as Section \ref{sectionCox} below.
\end{exes}

With the terminology of Definition \ref{defpbp},
Proposition \ref{propKL09} can be reformulated as follows
\cite[Theorem 1.4]{KoLo--09}:

\begin{thm}[Kotschick-L\"oh]
% 4.9
\label{propKL09bis}
Let $N$ be an oriented connected closed manifold.
Assume that $N$ is rationally essential and that $\pi_1(N)$ is not pbp.
Then $N$ is not dominated by any non-trivial product
of closed oriented connected manifolds.
\end{thm}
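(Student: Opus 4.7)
The plan is to proceed by contraposition: assume $N$ is dominated by a non-trivial product $M = M_1 \times M_2$, and derive that $\pi_1(N)$ must be pbp. So fix a continuous map $f : M_1 \times M_2 \longrightarrow N$ with $d := \deg(f) \ne 0$, and adopt the notation of Proposition \ref{propKL09}: base points $x_j$, subgroups $\Gamma_j = \pi_1(M_j, x_j)$ of $\Gamma = \pi_1(M)$, restrictions $f_j$ of $f$ to the slices $M_1 \times \{x_2\}$ and $\{x_1\} \times M_2$, and $\Delta_j = (f_j)_\pi(\Gamma_j) \subset \pi_1(N)$.

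First I would directly invoke Proposition \ref{propKL09}: since $N$ is rationally essential and $\deg(f) \ne 0$, both $\Delta_1$ and $\Delta_2$ are infinite subgroups of $\pi_1(N)$. This provides the first condition in Definition \ref{defpbp}. Next, because the slices $M_1 \times \{x_2\}$ and $\{x_1\} \times M_2$ commute with each other in $M$ as subspaces (more precisely, any loop in the first slice commutes with any loop in the second, because of the product structure), the subgroups $\Delta_1$ and $\Delta_2$ commute pointwise in $\pi_1(N)$. Hence the multiplication map
\begin{equation*}
\psi \, : \, \Delta_1 \times \Delta_2 \, \longrightarrow \, \pi_1(N) ,
\hskip.3cm (\delta_1, \delta_2) \, \longmapsto \, \delta_1 \delta_2
\end{equation*}
is a well-defined group homomorphism, and its image is exactly the subgroup of $\pi_1(N)$ generated by $\Delta_1$ and $\Delta_2$, which coincides with the image of $f_\pi : \pi_1(M) \longrightarrow \pi_1(N)$.

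Now I would apply Proposition \ref{propdegmanif}\ref{1DEpropdegmanif}: since $\deg(f) = d \ne 0$, the image of $f_\pi$ is a subgroup of finite index in $\pi_1(N)$ (dividing $\vert d \vert$). Therefore the subgroup generated by $\Delta_1$ and $\Delta_2$ has finite index in $\pi_1(N)$, verifying the second condition in Definition \ref{defpbp}. Taking $\Gamma_1 = \Delta_1$, $\Gamma_2 = \Delta_2$, and $\varphi = \psi$ then shows that $\pi_1(N)$ is presentable by a product, contradicting the hypothesis.

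The argument is really a bookkeeping combination of three ingredients already on the table: Proposition \ref{propKL09} for infiniteness of the $\Delta_j$, Proposition \ref{propdegmanif}\ref{1DEpropdegmanif} for the finite-index condition, and Definition \ref{defpbp} to package these as the pbp property. The one point worth double-checking is that the two restrictions $f_j$ are well-defined as based maps (so that the induced maps on $\pi_1$ make sense) and that the slices really give commuting subgroups in $\pi_1(N)$; both follow from the fact that $M$ is a genuine product and not a twisted bundle, so there is no real obstacle here.
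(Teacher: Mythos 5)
Your argument is correct and is essentially the paper's own route: the paper presents this theorem as a reformulation of Proposition \ref{propKL09} (which gives the infiniteness of $\Delta_1$ and $\Delta_2$, with the commuting property and the identification of the image of $\psi$ with the image of $f_\pi$ already noted in its statement), combined with Proposition \ref{propdegmanif}\ref{1DEpropdegmanif} for the finite-index condition. Your contrapositive write-up just makes this bookkeeping explicit, exactly as intended.
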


In the converse direction, let us quote the following result \cite[Theorem A]{Neof}:

\begin{thm}[Neofytidis]
% 4.10
Let $N$ be an oriented connected closed manifold.
Assume that $N$ is aspherical and that there is a subgroup of finite index in $\pi_1(N)$
that is a direct product of two infinite groups.
Then $N$ is dominated by a non-trivial direct product of oriented connected closed manifolds.
\end{thm}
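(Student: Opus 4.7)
The plan is to reduce to the case where $\pi_1(N)$ is itself a non-trivial direct product, and then combine the theory of Poincar\'e duality groups with Thom's realisation theorem.

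\emph{First step.}
Set $\pi = \pi_1(N)$ and let $H \le \pi$ be a finite-index subgroup of the form $H = \Gamma_1 \times \Gamma_2$ with both $\Gamma_j$ infinite. Take $p : \widetilde N \longrightarrow N$ to be the finite covering corresponding to $H$. Asphericity passes to finite covers, so $\widetilde N$ is an oriented connected closed aspherical $n$-manifold with $\pi_1(\widetilde N) = \Gamma_1 \times \Gamma_2$, and $\deg(p) = [\pi : H] \ne 0$. By multiplicativity of the degree (Proposition~\ref{degcomposition}), it will suffice to dominate $\widetilde N$ by a non-trivial product.

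\emph{Second step.}
Since $\widetilde N$ is closed aspherical orientable, $\Gamma := \Gamma_1 \times \Gamma_2$ is an orientable Poincar\'e duality group of dimension $n$. By Bieri's theorem on PD-group products, each factor $\Gamma_j$ is then itself an orientable PD$^{n_j}$-group for some $n_j \ge 1$, with $n_1+n_2 = n$. Thus $H_{n_j}(\Gamma_j; \Q) \cong \Q$ and $H_k(\Gamma_j; \Q) = 0$ for $k > n_j$. Via the homotopy equivalence $\widetilde N \simeq B\Gamma_1 \times B\Gamma_2$ and the K\"unneth formula, the only bidegree contributing to $H_n(\widetilde N; \Q)$ is $(n_1,n_2)$, so the fundamental class corresponds to a simple tensor
$$
[\widetilde N] \, = \, \alpha \, h_1 \otimes h_2 \, \in \, H_{n_1}(\Gamma_1; \Q) \otimes H_{n_2}(\Gamma_2; \Q),
$$
with $\alpha \in \Q \smallsetminus \{0\}$ and $h_j$ a generator of $H_{n_j}(\Gamma_j; \Q)$.

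\emph{Third step.}
Thom's realisation theorem \cite[Th\'eor\`eme III.4]{Thom--54} provides, for $j=1,2$, an oriented connected closed smooth $n_j$-manifold $M_j$, a non-zero integer $m_j$, and a continuous map $g_j : M_j \longrightarrow B\Gamma_j$ with $(g_j)_{n_j}([M_j]) = m_j h_j$. The product map
$$
G \, := \, g_1 \times g_2 \, : \, M_1 \times M_2 \, \longrightarrow \, B\Gamma_1 \times B\Gamma_2 \, \simeq \, \widetilde N
$$
then sends the fundamental class to $m_1 m_2 \, h_1 \otimes h_2 = (m_1 m_2 / \alpha) [\widetilde N]$, so $\deg(G) \ne 0$; composing $G$ with $p$ yields the sought non-zero-degree map $M_1 \times M_2 \longrightarrow N$.

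\emph{Main obstacle.}
The delicate ingredient is the second step. The K\"unneth formula alone only delivers a non-zero component of $[\widetilde N]$ in \emph{some} bidegree $(p,q)$ with $p+q = n$, and \emph{a priori} this component is only a \emph{sum} of simple tensors, which cannot be realised by a single pair of maps out of manifolds. It is Bieri's theorem on PD-group products, combined with asphericity, that forces the K\"unneth sum to collapse onto the one bidegree $(n_1,n_2)$ and the class to be a simple tensor; once this is in hand, Thom's theorem applies factor-by-factor and the product of the resulting maps does the job.
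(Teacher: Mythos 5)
The paper itself gives no proof of this statement: it is quoted as Theorem~A of \cite{Neof}, and your argument is essentially the one in that reference, so it is the ``same approach'' in substance. Your reduction to the finite aspherical cover with $\pi_1=\Gamma_1\times\Gamma_2$, the use of the theorem that direct factors of a Poincar\'e duality group are again Poincar\'e duality groups (Bieri) to force the K\"unneth decomposition of $H_n(\cdot;\Q)$ to collapse onto the single bidegree $(n_1,n_2)$ with the fundamental class a simple tensor, and the factor-by-factor realisation via Thom \cite{Thom--54} followed by composition with the covering (Proposition~\ref{degcomposition}) are all correct. Only two routine points are left implicit and should be mentioned: Thom's theorem concerns integral classes on finite polyhedra and may produce disconnected manifolds, so one should realise an integral generator of $H_{n_j}(\Gamma_j;\Z)\cong\Z$ supported on a finite subcomplex of a CW model of $B(\Gamma_j)$, and then retain one connected component of each realising manifold whose contribution to the degree is non-zero (the degree being the sum of the degrees of the restrictions to components), so that the dominating product is indeed a product of \emph{connected} closed oriented manifolds.
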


In contrast, there exist oriented connected closed $4$-manifolds $M$ 
such that $\pi_1(M)$ is the direct product of two infinite groups,
and yet $M$ is not dominated by any non-trivial product
\cite[Exampe 6.3]{KoLo--09}. Of course, these examples cannot be aspherical.

\section{Some examples, including Coxeter groups}
% section5
\label{sectionCox}

In this section, $k$ denotes a field of characteristic $0$.
For more details, see \cite{HaKo}.
\par
Definition \ref{defpbp} has analogues in other categories, e.g.:

\begin{defn}
% 5.1
\label{defpbpLiealg}
A $k$-Lie algebra $\mathfrak g$ is \textbf{presentable by a product} 
if there exist two commuting subalgebras
$\mathfrak g_1, \mathfrak g_2$ of $\mathfrak g$ of positive dimensions
such that the homomorphism 
$\mathfrak g_1 \times \mathfrak g_2 \longrightarrow \mathfrak g$,
$(X_1 , X_2) \longmapsto X_1 + X_2$, is surjective.
\par

Note that, if $\mathfrak g_1, \mathfrak g_2$ are as above, 
then they are non-zero ideals in $\mathfrak g$,
and each one is contained in the centralizer in $\mathfrak g$ of the other.
\end{defn}

\begin{defn}
% 5.2
\label{defpbpalggps}
A connected linear algebraic $k$-group $\G$
is \textbf{presentable by a product} if there exist
two commuting connected closed $k$-sub\-groups 
$\G_1, \G_2$ of $\G$ of positive dimensions
such that the multiplication homomorphism
$\mu : \G_1 \times \G_2 \longrightarrow \G$, $(g_1, g_2) \longmapsto g_1g_2$,
is surjective.

Note that, if $\G_1, \G_2$ are as above,
then they are non-trivial normal subgroups of $\G$, 
and each one is contained in the centralizer of the other.
\end{defn}

\begin{prop}
% 5.3
\label{changecategories}
Let $\G$ be a connected linear algebraic $k$-group, $\mathfrak g$ its Lie algebra,
and $\Gamma$ a Zariski dense subgroup of the group $G = \G (k)$ of rational points of $\G$.
\par
(1) If $\mathfrak g$ is not presentable by a product, then $\G$ is not presentable by a product.
\par
(2) If $\G$ is not presentable by a product, then $\Gamma$ is not presentable by a product.
\par\noindent
$[$\emph{The proof is routine; we refer to \cite{HaKo}.}$]$
\end{prop}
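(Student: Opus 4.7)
My plan is to prove both implications by contraposition, so in each case the hypothesis supplies a product presentation at one level and the task is to manufacture one at the level above.

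For (1), I would start from commuting connected closed $k$-subgroups $\G_1, \G_2$ of positive dimension with multiplication $\mu\colon \G_1 \times \G_2 \to \G$ surjective, and set $\mathfrak g_i = \mathrm{Lie}(\G_i) \subset \mathfrak g$. Positivity of $\dim \mathfrak g_i$ is immediate. Commutation of $\G_1$ and $\G_2$ forces the commutator morphism $\G_1 \times \G_2 \to \G$ to be constant equal to $e$, and differentiating at $(e,e)$ yields $[\mathfrak g_1, \mathfrak g_2] = 0$. The commutativity of $\G_1$ with $\G_2$ is exactly what makes $\mu$ a homomorphism of algebraic $k$-groups, so I would invoke the characteristic-$0$ fact that a surjective homomorphism of algebraic groups is smooth; this forces the tangent map $d\mu_{(e,e)}\colon \mathfrak g_1 \oplus \mathfrak g_2 \to \mathfrak g$, $(X_1,X_2) \mapsto X_1+X_2$, to be surjective, so $\mathfrak g$ is presentable by a product.

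For (2), I would start from infinite commuting subgroups $\Delta_1, \Delta_2 \subset \Gamma$ whose product (which equals the subgroup they generate, by commutativity) has finite index in $\Gamma$. Let $\G_i$ be the Zariski closure of $\Delta_i$ in $\G$ and $\G_i^0$ its identity component. Pointwise commutation passes to Zariski closures, so $\G_1, \G_2$, hence also $\G_1^0, \G_2^0$, commute elementwise. If some $\G_i$ were zero-dimensional it would have only finitely many $k$-points in characteristic $0$, contradicting $|\Delta_i| = \infty$; therefore $\dim \G_i^0 > 0$. For surjectivity of $\G_1^0 \times \G_2^0 \to \G$, I would argue in two steps: first, the finite-index subgroup $\Delta_1 \Delta_2$ of $\Gamma$ is still Zariski dense in the connected (hence irreducible) group $\G$, since its closure together with finitely many translates covers $\overline{\Gamma} = \G$; this gives $\G_1\G_2 = \G$. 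Second, since $\G_1^0, \G_2^0$ commute, their product is a constructible subgroup of $\G$ and thus closed (a classical theorem on subgroups of algebraic groups); being of finite index in $\G_1 \G_2 = \G$ and closed in the connected group $\G$, it equals $\G$.

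I expect the main obstacle to sit inside part (2), in the interplay between Zariski closures, identity components, and finite-index subgroups over a field $k$ that need not be algebraically closed: the two appeals to ``constructible subgroup is closed'' and ``closed finite-index subgroup of a connected algebraic group is the whole group'' need to be applied at exactly the right moments, and the passage from $\G_1 \G_2 = \G$ down to $\G_1^0 \G_2^0 = \G$ is where a naive argument is most likely to slip. Part (1) is essentially formal once one accepts that a surjective homomorphism of algebraic groups in characteristic $0$ has surjective differential at the identity.
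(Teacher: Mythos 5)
The paper itself gives no argument for this proposition --- it declares the proof routine and refers to \cite{HaKo} --- so there is no in-text proof to compare with; judged on its own, your argument is correct and supplies exactly the routine details one expects. For (1), the key points are right: since $\G_1,\G_2$ commute elementwise, $\mu$ is a homomorphism of algebraic groups, and in characteristic $0$ a surjective homomorphism induces a surjection on Lie algebras, while $d\mu_{(e,e)}(X_1,X_2)=X_1+X_2$. One small imprecision: $[\mathfrak g_1,\mathfrak g_2]=0$ does not follow from the first-order differential of the commutator morphism at $(e,e)$, which vanishes for \emph{any} pair of subgroups; the standard route is to note that $\operatorname{Ad}(g_1)$ acts trivially on $\mathfrak g_2$ for every $g_1\in\G_1$ and then differentiate in $g_1$ to get $\operatorname{ad}(X_1)\vert_{\mathfrak g_2}=0$. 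For (2), your chain of facts is sound and complete: commutation passes to Zariski closures; an infinite $\Delta_i$ forces $\dim\G_i>0$; a finite-index subgroup of the Zariski-dense $\Gamma$ is still dense because the connected group $\G$ is irreducible; the image $\G_1\G_2$ (and likewise $\G_1^0\G_2^0$) is a closed subgroup, being the image of a homomorphism of algebraic groups; and a closed finite-index subgroup of the connected $\G$ is also open, hence equals $\G$. The passage from $\G_1\G_2=\G$ to $\G_1^0\G_2^0=\G$, which you flag as the delicate spot, is indeed handled correctly via the finite coset decomposition coming from $[\G_i:\G_i^0]<\infty$ and elementwise commutation; the only detail left tacit, harmless in characteristic $0$, is that the closures and their identity components are defined over $k$.
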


\begin{exes}
% 5.4
\label{expbpGammainG}
(1)
Let 
$\operatorname{Af}_1(\R) = 
\begin{pmatrix}
\mathbf R^\times & \mathbf R 
\\
0 & 1 
\end{pmatrix}$
be the real affine group,
or more precisely the group of real points of the appropriate algebraic group.
Observe that $\operatorname{Af}_1(\R)$ is centreless, of dimension $2$.
As an algebraic group, it is connected, even if, as a real Lie group, 
its connected component
$ 
\begin{pmatrix}
\mathbf R_+^\times & \mathbf R 
\\
0 & 1 
\end{pmatrix}$
is of index two.
\par
Its Lie algebra is the non-abelian soluble Lie algebra 
$\mathfrak{af}_1(\R)$ of dimension $2$, 
with basis $\{e,g\}$ such that $[g,e] = e$.
\par

For every $n \in \Z$ with $\vert n \vert \ge 2$, the soluble Baumslag-Solitar group
$\operatorname{BS}(1,n)$,
of presentation
$\langle a,t \mid tat^{-1} = a^n \rangle$,
is isomorphic to a Zariski dense subgroup of $\operatorname{Af}_1(\R)$:
\begin{equation*}
\operatorname{BS}(1,n) \, \approx \, 
\begin{pmatrix}
\mathbf n^{\mathbf Z} & \mathbf Z [1/n] 
\\
0 & 1 
\end{pmatrix} \, \subset \,
\begin{pmatrix}
\mathbf R^\times & \mathbf R 
\\
0 & 1 
\end{pmatrix}
\, = \,
\operatorname{Af}_1(\R) .
\end{equation*}

Since $\mathfrak{af}_1(\R)$ has a unique non-trivial ideal, $\R e$ of dimension one, 
$\mathfrak{af}_1(\R)$ is not presentable by a product.
It follows  from Proposition \ref{changecategories}
that the group $\operatorname{BS}(1,n)$ is not presentable by a product.
\par

It is a fact that all Baumslag-Solitar groups $\operatorname{BS}(m,n)$
with $\vert m \vert \ne \vert n \vert$ are not presentable by products \cite{HaKo}. 

\vskip.2cm

(2) 
For $n \ge 2$, the group $\operatorname{SL}_n(\Z)$
is not presentable by a product.
More generally, every Zariski dense subgroup of $\operatorname{SL}_n(\R)$
is not presentable by a product.

\vskip.2cm

(3)
Consider a semi-direct product $\mathfrak h = V^r \rtimes \mathfrak{g}$,
where $V$ is a finite dimensional $k$-vector space,
$\mathfrak g$ a simple Lie subalgebra of $\mathfrak{gl}(V)$
acting irreducibly on $V$, and $r$ a positive integer;
the space  $V^r = V \oplus \cdots \oplus V$ (with $r$ factors) 
is considered as an abelian Lie algebra
on which $\mathfrak g$ acts by the restriction of the natural diagonal action
of  $\mathfrak{gl}(V)$ on $V^r$.
Then $\mathfrak h$ is not presentable by a product
(Hint:
every non-zero ideal of $V^r \rtimes \mathfrak g$
is either the full Lie algebra, with centre $\{0\}$, 
or a $ \mathfrak g$-invariant subspace of $V^r$, with centralizer $V^r$.)
\end{exes}

Consider now a Coxeter system $(W,S)$, with $S$ finite.
Let $B_S$ denote the Tits form on the vector space $\R^S$,
and $\operatorname{Of}(B_S)$ the group of invertible linear transformations
$g \in \operatorname{GL}(\R^S)$ such that 
$B_S(gv,gw) = B_S(v,w)$ for all $v,w \in \R^S$ and $gv = v$ for all $v \in \ker (B_S)$.
\par
Suppose that $(W,S)$ is irreducible, and neither finite nor affine.
On the one hand,
it is easy to check that its Lie algebra
$\mathfrak{of}(B_S)$ is of the form of $\mathfrak h$ in Example \ref{expbpGammainG}(3),
so that it is not presentable by a product.
On the other hand, the geometric representation 
$\sigma_S : W \longrightarrow \operatorname{Of}(B_S)$ is Zariski-dense \cite{BeHa--04}.
It follows that $W$ is not presentable by a product.
To summarize:

\begin{exe}
% 5.5
Let $(W,S)$ be a Coxeter system, with $S$ finite, which is irreducible, infinite, and non-affine.
Then $W$ is not presentable by a product.
\end{exe}

\end{document}